\newtheorem{theorem}{Theorem}[section]
\newtheorem{remark}[theorem]{Remark}
\numberwithin{equation}{section}
\newcommand{\la}{\langle}
\newcommand{\ra}{\rangle}
\newcommand{\ut}{u(t)}
\newcommand{\xt}{x(t)}
\newcommand{\ft}{f(t)}
\newcommand{\us}{u(s)}
\newcommand{\xs}{x(s)}
\newcommand{\unt}{u_n(t)}
\newcommand{\xnt}{x_n(t)}
\newcommand{\fnt}{f_n(t)}
\newcommand{\uqt}{u_q(t)}
\newcommand{\xqt}{x_q(t)}
\newcommand{\fqt}{f_q(t)}
\newcommand{\uns}{u_n(s)}
\newcommand{\xns}{x_n(s)}
\newcommand{\until}{\widetilde{u}_n(t)}
\newcommand{\untils}{\widetilde{u}_n(s)}
\newcommand{\util}{\widetilde{u}(t)}
\newcommand{\wl}{\rightharpoonup}
\newcommand{\bx}{\mbox{\boldmath{$x$}}}
\newcommand{\by}{\mbox{\boldmath{$y$}}}
\newcommand{\bz}{\mbox{\boldmath{$z$}}}
\newcommand{\bff}{\mbox{\boldmath{$f$}}}
\newcommand{\bv}{\mbox{\boldmath{$v$}}}
\newcommand{\bu}{\mbox{\boldmath{$u$}}}
\newcommand{\btau}{\mbox{\boldmath{$\tau$}}}
\newcommand{\bepsilon}{\mbox{\boldmath{$\varepsilon$}}}
\newcommand{\bsigma}{\mbox{\boldmath{$\sigma$}}}
\newcommand{\bcero}{\mbox{\boldmath{$0$}}}
\newcommand{\bnu}{\mbox{\boldmath{$\nu$}}}
\newcommand{\bzero}{\mbox{\boldmath{$0$}}}
\newenvironment{keywords}{
  \vspace{2mm}
  \noindent
  \keywordsname: 
  \itshape\small
}
\newenvironment{mathsubclass}{
  \small
  \noindent
  \mathsubclassname: 
}
 \def\keywordsname{\textbf{Keywords}}
  \def\mathsubclassname{\textbf{2010 AMS Subject Classification}}
\newtheorem{teo}{Theorem}[section]
\newtheorem{defi}{Definition}[section]
\newtheorem{lema}{Lemma}[section]
\newtheorem{rem}{Remark}[section] 
\renewcommand\@biblabel[1]{#1.}
\newcommand\be{\begin{equation}}
\newcommand\ee{\end{equation}}
\begin{document}

\title{ Optimal control of differential quasivariational inequalities with applications in contact mechanics}

\author{
Mircea Sofonea$^{1}$, Julieta Bollati$^{2}$, Domingo A. Tarzia$^{2}$ \\ \\
\small{{$^{1}$} \it D\'epartement de Math\'ematiques, Universit\'e de Perpignan Via Domitia,}\\
 \small{ \it   52 Avenue Paul Alduy, 66860 Perpignan, France}\\
\small{{$^2$} \it Depto. Matem\'atica - CONICET, FCE, Univ. Austral, Paraguay 1950} \\  
\small \it {S2000FZF Rosario, Argentina} \\
}
\date{}

\maketitle

\vskip 4mm

\abstract{\noindent We consider a  differential quasivariational inequality for which we state and prove the continuous dependence of the solution with respect to the data. This convergence result allows  us to prove the existence of at least one optimal pair  for an associated  control problem.  Finally, we illustrate our abstract results in the study of a free boundary problem  which describes the equilibrium of a viscoelastic body in frictionless contact with a foundation made of a rigid body coveblack by a rigid-elastic layer.

 \vskip 6mm

\begin{keywords}
\textit{Differential quasivariational inequality, Mosco convergence, convergence results, optimal control, viscoelastic material, frictionless problem, unilateral constraint.}
\end{keywords}
\vspace{0.03cm}

\vskip 6mm

\begin{mathsubclass}
35M87, 35R35, 47J20, 49J40, 49J45, 74M15.
\end{mathsubclass}

\vskip 18mm
\section{Introduction}\setcounter{equation}0

The present paper is motivated by  the study of mathematical models which describe the {\color{black} time-dependent} unilateral contact of a
deformable  body with a foundation.
Under appropriate mechanical assumptions on the constitutive law and the interface conditions, such kind of models lead to a weak formulation which is in the form of a system that couples an ordinary differential equation with a  variational or  quasivariational inequality.
Despite the fact that  the  solvability of such systems can be obtained by using  various abstract existence and uniqueness results available in the literature, at the best of our knowledge there  are very few results on the optimal control of the corresponding contact models.  In this current paper we try to fill this gap and, to this end, we use arguments of  variational and differential variational inequalities.

The theory of variational inequalities begun with the pioneering works \cite{St1964,LiSt1967,Br1972}. Later, various extensions and   applications were provided and the literature in the field is extensive. 
Comprehensive references on this subject are \cite{Ne2012,Li1969,EkTe1973,BaCa1984,KiSt1980,Cr1984,Ro1987,AlSo1993,DeMiPa2003}.
{\color{black}
A survey of several classes of time-dependent and evolutionary  variational inequalities, with our without   unilateral constraints, can be found in \cite{Gw2003}. There, results on existence and regularity for
parabolic and hyperbolic evolutionary variational inequalities can be found.}
The theory plays an important role in Mechanics, Physics and Engineering Sciences where a large number of free boundary problems lead to elliptic or parabolic variational inequalities problems. Some relevant examples of such problems are the free boundary problems related to fluid flows through porous media \cite{Ba1971}, phase-change processes for the one-phase Stefan problem \cite{Du1973} and two-phase Stefan problem \cite{Ta1979}.
Variational inequalities arise in the study of mathematical models  in Contact Mechanics too, as illustrated in the books \cite{DuLi1976,Pa1985,SoHaSh2006,SoMa2012,MiOcSo2013,SoMi2018}. Their optimal control
has been studied in \cite{Li1968,NeSpTi2006,HiPiUlUl2009,Tr2010,Cl2013}, for instance.

A differential variational inequality represents  a system that couples a differential equation with a variational or quasivariational inequality.  This terminology was used for the first time in \cite{AuCe1984}. 
Existence, uniqueness and convergence results have been obtained in \cite{Gw2013,LiZeMo2016,LiMiZe2017,LiZe2019}, {\color{black}for instance. 
A stability result for the solution set of differential variational inequalities  has been obtained in \cite{Gw2007,Gw2013-2}. There, perturbations of the associated set-valued mapping
and perturbations of the  set  of constraints have been consideblack. Moreover, the Mosco convergence of sets has been employed. The results in \cite{Gw2013-2} allow,  in particular, the treatment of quasistatic contact problems with short memory viscoelastic materials and Tresca's friction law.}
A new  class of differential quasivariational inequalities in Banach spaces has been consideblack in \cite{LiSo2018}. There, an existence and uniqueness result has been obtained by using a general fixed point principle. Moreover, some examples and applications have been presented, including  the variational analysis of a contact problem with viscoplastic materials. 

The current paper represents a continuation of \cite{LiSo2018}. Its aim is three fold. The first one is to complete the abstract existence and uniqueness result in  \cite{LiSo2018} with a general convergence result for the solution. Here we assume that all the problem data are perturbed, i.e., the second member and the initial condition of the differential equation, the monotone operator, the non-differentiable function, the convex set and the second member of the variational inequality, then we study the behaviour of the solution with respect these perturbations. The second aim  is to complete our previous work  \cite{LiSo2018} with
an existence result for an associated optimal control problem. Finally,  our third aim is to apply these new results in the study of an viscoelastic frictionless contact problem
with history-dependent hardening parameter.

The rest of the paper is structublack as follows. In Section \ref{s2} we introduce the differential quasivariational inequality we are interested in, denoted by
$\mathcal{P}$. Then, we recall some preliminary results which are needed  later in this paper. In Section \ref{s3} we present our general convergence result, Theorem \ref{t1}, which states the continuous dependence of the solution of  Problem $\mathcal{P}$ on the data.  The proof of the theorem is carried out in several steps, based on arguments on convexity, pseudomonotonicity and compactness.
Then, in Section \ref{s4} we introduce an optimal control problem associated to the  differential quasivariational inequality $\mathcal{P}$ and prove the existence of at least one optimal solution, Theorem \ref{t2}. Its proof is based on arguments of compactness and lower semicontinuity. Finally, in Section \ref{s5}, we present an application of our abstract results in the study of a mathematical model of contact with viscoelastic materials. We describe the model, list  the assumption on the data, then we state and prove  its unique weak solvability. Next, we prove the continuous dependence of the weak solution with respect to the data as well as the existence of the solution for an associated optimal control problem. We also provide the  mechanical interpretation of our results.

\section{Preliminaries}\label{s2}
\setcounter{equation}0

Throughout this paper  $I$ denotes either a bounded or an  unbounded time-interval, i.e., $I=[0,T]$ with $T>0$ or $I=\mathbb{R}_+=[0,+\infty)$. We consider two real Banach spaces $X$, $V$ and a real Hilbert space $Z$,
endowed with the inner product $(\cdot,\cdot)_Z$. The norm on these space will be denoted by $\Vert \cdot \Vert_X$, $\Vert \cdot \Vert_V$ and $\Vert \cdot \Vert_Z$, respectively.
The strong topological dual space of $V$ is denoted by $V^{*}$ and the duality paring of $V$ and $V^{*}$ is denoted by $\langle \cdot,\cdot \rangle$.
We shall use the symbols ``$\wl$'' and ``$\to$'' for the weak and strong convergence in various normed  spaces to be specified. All the limits, upper and lower limits are consideblack as $n\to\infty$, even if we do not mention it explicitly.
Moreover, we  use the notation $C(I;V)$ and $C(I;Z)$ for the space of continuous functions on $I$ with values in $V$ and $Z$, respectively. In addition, we denote by a dot above the derivative with respect to the time and we adopt the notation  $C^1(I;X)$ for the space of continuously differentiable function  defined on $I$ with values in $X$.

Consider the following data: $F:I\times X\times V\to X$, $x_0\in X$, $A:X\times V\to V^*$, $j:X\times V\times V\to \mathbb{R} $, $\pi:V\to Z$,  $f:I\to V$ and $K\subset V$. Then, the differential quasivariational inequality problem we consider in this paper is stated as follows.

\medskip

\noindent \textbf{Problem $\mathcal{P}$}. 
 {\it Find  $x\in C^1(I;X)$ and $u\in C(I;V)$ such that}
 \begin{align}
&\dot{x}(t)=F(t,x(t),u(t))\qquad \forall\, t\in I, \label{DiffEq-P}\\
&x(0)=x_0,\label{InitCond-P}\\
& u(t)\in K, \quad  \langle  A(\xt,\ut),v-\ut \rangle+j(\xt,\ut,v)-j(\xt,\ut,\ut) \nonumber\\
& \qquad \qquad \qquad\quad \geq (\ft,\pi v-\pi\ut)_Z\quad \forall\, v\in K,\  t\in I.  \label{VarIneq-P}
\end{align}

The study of Problem  $\mathcal{P}$ requires some preliminaries that we present in what follows.


\begin{defi} 
An operator $B:V\to V^*$ is said to be:
\begin{itemize} 
\item [{\rm(i)}] Lipschitz continuous, if there exists $L_B>0$ such that
 $$\| Bu_1-Bu_2\|_{V^*}\leq L_B \|u_1-u_2 \|_V \qquad \forall\,u_1,u_2\in V;$$
\item[{\rm(ii)}] strongly monotone, if there exists $m_B>0$ such that
 $$\la Bu_1-Bu_2,u_1-u_2\ra\geq m_B \|u_1-u_2 \|^2_V \qquad \forall\, u_1,u_2\in V.$$
\end{itemize}
\end{defi}

\medskip
Consider now the following assumptions on the data of Problem $\mathcal{P}$.
\begin{eqnarray}
&&\left\{ \begin{array}{ll} F:I\times X\times V\to X \mbox{\ is\ such that:}\\[2mm]
{\rm (a)\ The\ mapping\ } t\to F(t,x,u)\ {\rm is\  continuous\ for\ all} \ x\in X, u\in V.\\[2mm]
{\rm (b)\ For\ any\ compact\ set\ } J\subset I\ {\rm there\ exists\ } L_J>0 {\rm \ such\ that\ }\\[0mm]
\qquad \|F(t,x_1,u_1)-F(t,x_2,u_2)\|_X\leq L_J\, \left( \|x_1-x_2\|_X +\|u_1-u_2\|_V \right)\,\\[0mm]
\qquad {\rm for\ all\ }  x_1,x_2\in X,\; u_1, u_2\in V,\; t\in J.
\end{array}\right. \label{F}
\end{eqnarray}
\begin{equation}\label{x0}
x_0\in X.
\end{equation}
\begin{equation}\label{K}
K {\rm \ is\ a\ nonempty\ closed\ convex\ subset\ of\ } V.
\end{equation}
\begin{eqnarray}
&&\left\{ \begin{array}{ll} A:X\times V\to V^* \mbox{\ is\ such that:}\\[2mm]
{\rm (a)\ There\ exists\  } L'>0 {\rm \  such\ that\ }\\[0mm]
\qquad \|A(x_1,u)-A(x_2,u)\|_{V^*}\leq L' \|x_1-x_2 \|_X\\[0mm]
\qquad {\rm for\ all\ } x_1,x_2\in X,\; u\in V.\\[2mm]
{\rm (b)\ There\ exists\  } L''>0 {\rm \  such\ that\ }\\[0mm]
\qquad \|A(x,u_1)-A(x,u_2)\|_{V^*}\leq L'' \|u_1-u_2 \|_V\\[0mm]
\qquad {\rm for\ all\ } x\in X,\; u_1,u_2\in V.\\[2mm]
{\rm (c)\ There\ exists\  } m>0 {\rm \  such\ that\ }\\[0mm]
\qquad \la A(x,u_1)-A(x,u_2),u_1-u_2\ra\geq m \| u_1-u_2 \|^2_V\qquad\qquad \qquad\quad\\[0mm]
\qquad {\rm for\ all\ } x\in X,\; u_1,u_2\in V.
\end{array}\right. \label{A}
\end{eqnarray}
\begin{eqnarray}
&&\left\{ \begin{array}{ll} j:X\times V\times V\to \mathbb{R} \mbox{\ is\ such that:}\\[2mm]
{\rm (a)\ For\ all\  } x\in X {\rm \ and \ } u\in V, \;j(x,u,\cdot) {\rm \ is\ convex\ }\\[0mm]
\qquad {\rm and\ lower\ semicontinuous\ (l.s.c)\ on\  }V.\\[2mm]
{\rm (b)\ There\ exists\  } \alpha>0 {\rm \ and\ } \beta>0 {\rm \  such\ that\ }\\[0mm]
\qquad j(x_1,u_1,v_2)-j(x_1,u_1,v_1)+j(x_2,u_2,v_1)-j(x_2,u_2,v_2)\qquad\quad \\[0mm]
\qquad \leq \alpha \|x_1-x_2\|_X \|v_1-v_2 \|_V+\beta \|u_1-u_2\|_V \| v_1-v_2\|_V,\\[0mm]
\qquad {\rm for\ all\ } x_1,x_2\in X,\; u_1,u_2\in V, \; v_1,v_2\in V.
\end{array}\right. \label{j}
\end{eqnarray}
\begin{equation}
m>\beta. \label{m}
\end{equation}
\begin{equation}
f\in C(I;Z).\label{f}
\end{equation}
\begin{equation}\label{pi}
\left\{ \begin{array}{l} \pi:V\to Z\ {\rm is\ a\ linear\ continuous\ operator,\ i.e.,}\qquad\qquad\qquad\;\\[2mm]
{\rm there\ exists} \ c_0>0\ {\rm such\ that\ } \|\pi v\|_{Z} \leq c_0\,\|v\|_V\ \ \forall\,v\in V.
\end{array}\right.
\end{equation}
\medskip


Note  that assumption  (\ref{pi}) allows us to apply the Riesz representation theorem in order to define a function  $\overline{f}:I\to V^*$ such that
\begin{equation}
\la \overline{f},v\ra=\left(f(t),\pi v \right)_Z\qquad \forall\, v\in V,\; t\in I.\label{Riesz}
\end{equation}
Furthermore, assumption (\ref{f}) implies that $\overline{f}\in C(I;V^*)$.
Hence, the following results are obtained as a direct  consequence of Theorem 3.1 and  Lemma 3.6 in  \cite{LiSo2018}, respectively.


\begin{teo}\label{ExistUniq-P}
Assume that $X$ is a Banach space, $V$ is a reflexive Banach space, $Z$ is a Hilbert space and $(\ref{F})$--$(\ref{pi})$ hold. Then Problem $\mathcal{P}$ has a unique solution $(x,u)\in C^1(I;X)\times C(I;V)$.
\end{teo}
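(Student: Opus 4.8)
The plan is to decouple Problem $\mathcal{P}$ into a time-dependent elliptic quasivariational inequality, in which the unknown $x$ enters merely as a parameter, and a Cauchy problem for an ordinary differential equation in the Banach space $X$; the two are then combined through a fixed point argument. This is exactly the mechanism behind Theorem~3.1 of \cite{LiSo2018}, and what follows indicates how the present hypotheses \eqref{F}--\eqref{pi} fit into that framework. Observe first that, by \eqref{pi}, the Riesz representation theorem furnishes $\overline f(t)\in V^{*}$ with $\la\overline f(t),v\ra=(\ft,\pi v)_Z$ for all $v\in V$, and \eqref{f} gives $\overline f\in C(I;V^{*})$, so that \eqref{VarIneq-P} can be rewritten with $\la\overline f(t),v-\ut\ra$ on its right-hand side.

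\emph{Step 1: the frozen quasivariational inequality.} Fix $y\in C(I;X)$. For each $t\in I$ I would solve the problem: find $u\in K$ such that
\[
\la A(y(t),u),v-u\ra+j(y(t),u,v)-j(y(t),u,u)\ge\la\overline f(t),v-u\ra\qquad\forall\,v\in K.
\]
Here $K$ is nonempty, closed and convex by \eqref{K}, $V$ is reflexive, $A(y(t),\cdot)$ is strongly monotone with constant $m$ and Lipschitz continuous by \eqref{A}, while $j(y(t),\cdot,\cdot)$ obeys the convexity and lower semicontinuity in \eqref{j}(a) and the bound in \eqref{j}(b); the smallness assumption \eqref{m}, i.e. $m>\beta$, then yields a unique solution $u=u_y(t)$ for every $t$, by a standard elliptic quasivariational inequality result (the nonlinearity coming from $j(x,u,\cdot)$ being handled by an inner Banach fixed point). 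Subtracting the inequalities written at two instants and using \eqref{A}, \eqref{j}(b) together with the continuity of $y$ and of $f$ gives $\|u_y(t_1)-u_y(t_2)\|_V\le c\,(\|y(t_1)-y(t_2)\|_X+\|f(t_1)-f(t_2)\|_Z)$, so $u_y\in C(I;V)$ and a solution operator $\Lambda\colon C(I;X)\to C(I;V)$, $\Lambda y=u_y$, is well defined. Running the same subtraction for two parameters $y_1,y_2$ at a fixed $t$ leads to $(m-\beta)\,\|(\Lambda y_1)(t)-(\Lambda y_2)(t)\|_V\le(\alpha+L')\,\|y_1(t)-y_2(t)\|_X$; this estimate is Lemma~3.6 of \cite{LiSo2018}.

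\emph{Step 2: the Cauchy problem.} Substituting $u=\Lambda x$ into \eqref{DiffEq-P}--\eqref{InitCond-P} turns Problem $\mathcal{P}$ into the equivalent problem of finding $x\in C^{1}(I;X)$ with
\[
\dot x(t)=F\bigl(t,x(t),(\Lambda x)(t)\bigr)\quad\forall\,t\in I,\qquad x(0)=x_0.
\]
By \eqref{F}(a) the right-hand side is continuous in $t$, and by \eqref{F}(b) together with the Lipschitz bound for $\Lambda$ the map $x\mapsto F(\cdot,x(\cdot),(\Lambda x)(\cdot))$ is Lipschitz from $C(J;X)$ into itself on every compact $J\subset I$, with constant controlled by $L_J\bigl(1+\tfrac{\alpha+L'}{m-\beta}\bigr)$. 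A Cauchy--Lipschitz argument in the Banach space $X$ — performed on $[0,T]$ by means of the usual exponentially weighted (Bielecki) norm, and then patched over $I$ in the case $I=\mathbb{R}_{+}$ — gives a unique $x\in C^{1}(I;X)$; then $(x,\Lambda x)$ solves $\mathcal{P}$, and conversely any solution of $\mathcal{P}$ must satisfy $u=\Lambda x$ with $x$ solving this Cauchy problem, which yields uniqueness.

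The hard part is Step~1: proving unique solvability of the frozen quasivariational inequality and, more importantly, the time-local Lipschitz dependence of $\Lambda$ on the parameter, since precisely this makes the outer fixed point contractive. It is here that the strong monotonicity \eqref{A}(c) has to dominate the coupling constant $\beta$ through \eqref{m}, which is why I would invoke Lemma~3.6 of \cite{LiSo2018} rather than reprove it. A minor additional point is to keep every estimate uniform on arbitrary compact subintervals, so that the argument also covers the unbounded case $I=\mathbb{R}_{+}$.
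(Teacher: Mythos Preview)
Your proposal is correct and follows essentially the same approach as the paper: the paper does not give an independent proof of this theorem but simply records it as a direct consequence of Theorem~3.1 (and Lemma~3.6) in \cite{LiSo2018}, after observing via \eqref{Riesz} that the right-hand side can be written as a duality pairing with $\overline f\in C(I;V^{*})$. Your outline reproduces the mechanism of that reference---freeze $x$, solve the quasivariational inequality to define a Lipschitz solution operator $\Lambda$, then apply a Banach-space Cauchy--Lipschitz/Bielecki argument to the reduced ODE---and explicitly cites the same two results, so there is nothing to add.
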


\begin{lema}\label{ExistUniq-VarIneq}
	Assume that  $X$ is a Banach space, $V$ is  a reflexive Banach space and $(\ref{K})$--$(\ref{pi})$ hold. Then, for each $\widetilde{x}(t)\in C^1(I;X)$, there exists a unique function $u\in C(I;V)$ such that
	\begin{align}
	u(t)\in K, \qquad& \langle  A(\widetilde{x}(t),\ut),v-\ut \rangle+j(\widetilde{x}(t),\ut,v)-j(\widetilde{x}(t),\ut,\ut)\nonumber\\
	&\qquad\geq (\ft,\pi v-\pi\ut)_Z,\qquad \forall\, v\in K,\ t\in I.  \label{VarIneq-xFija}
	\end{align}
\end{lema}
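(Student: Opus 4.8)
\noindent\emph{Proof plan.} The idea is to solve the inequality (\ref{VarIneq-xFija}) pointwise in $t$ by a Banach fixed-point argument, and then to recover the continuity $u\in C(I;V)$ from a Lipschitz-type stability estimate in $t$. Fix $t\in I$ and abbreviate $y=\widetilde{x}(t)\in X$, $g=\overline{f}(t)\in V^*$, where $\overline{f}$ is the function defined in (\ref{Riesz}); recall that (\ref{f}) together with (\ref{pi}) gives $\overline{f}\in C(I;V^*)$, and that $\widetilde{x}\in C^1(I;X)\subset C(I;X)$. At this fixed $t$, (\ref{VarIneq-xFija}) is the stationary quasivariational inequality: find $u\in K$ with $\la A(y,u),v-u\ra+j(y,u,v)-j(y,u,u)\geq\la g,v-u\ra$ for all $v\in K$. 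For each $w\in V$ I would freeze the first two slots of $j$ at $(y,w)$ and consider the auxiliary elliptic variational inequality: find $u=\Lambda w\in K$ such that $\la A(y,u),v-u\ra+j(y,w,v)-j(y,w,u)\geq\la g,v-u\ra$ for all $v\in K$. By (\ref{A})(b),(c) the operator $A(y,\cdot)\colon V\to V^*$ is Lipschitz continuous and strongly monotone with constant $m$, and by (\ref{j})(a) the functional $j(y,w,\cdot)$ is convex, lower semicontinuous and (being real valued) proper on $V$; since $V$ is reflexive and $K$ is nonempty, closed and convex by (\ref{K}), the classical existence and uniqueness theorem for elliptic variational inequalities applies and $\Lambda\colon V\to K\subset V$ is well defined.

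Next I would show that $\Lambda$ is a contraction. For $w_1,w_2\in V$ put $u_i=\Lambda w_i$; testing the inequality defining $u_1$ with $v=u_2$ and the one defining $u_2$ with $v=u_1$ and adding yields
\[
\la A(y,u_1)-A(y,u_2),u_1-u_2\ra\leq j(y,w_1,u_2)-j(y,w_1,u_1)+j(y,w_2,u_1)-j(y,w_2,u_2).
\]
The left-hand side is bounded below by $m\|u_1-u_2\|_V^2$ by (\ref{A})(c), while the right-hand side is bounded above by $\beta\|w_1-w_2\|_V\|u_1-u_2\|_V$ by (\ref{j})(b) (here the two first arguments coincide, so the $\alpha$-term disappears). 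Hence $\|\Lambda w_1-\Lambda w_2\|_V\leq(\beta/m)\|w_1-w_2\|_V$, and since $m>\beta$ by (\ref{m}) the Banach fixed-point theorem provides a unique $u(t)\in K$ with $\Lambda u(t)=u(t)$. This $u(t)$ is precisely the unique solution of the stationary inequality at $t$, because any solution of that inequality is a fixed point of $\Lambda$.

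Finally I would establish that $t\mapsto u(t)$ is continuous from $I$ into $V$. For $t_1,t_2\in I$, write the inequality (\ref{VarIneq-xFija}) at $t_1$ with $v=u(t_2)$ and at $t_2$ with $v=u(t_1)$, add them, and split $A(\widetilde{x}(t_1),u(t_1))-A(\widetilde{x}(t_2),u(t_2))$ into the strongly monotone part $A(\widetilde{x}(t_1),u(t_1))-A(\widetilde{x}(t_1),u(t_2))$, controlled by (\ref{A})(c), and the remainder $A(\widetilde{x}(t_1),u(t_2))-A(\widetilde{x}(t_2),u(t_2))$, controlled by (\ref{A})(a). Combining this with (\ref{j})(b) and the bound $\la\overline{f}(t_1)-\overline{f}(t_2),u(t_2)-u(t_1)\ra\geq-\|\overline{f}(t_1)-\overline{f}(t_2)\|_{V^*}\|u(t_1)-u(t_2)\|_V$ gives, after dividing by $\|u(t_1)-u(t_2)\|_V$,
\[
(m-\beta)\,\|u(t_1)-u(t_2)\|_V\leq(L'+\alpha)\,\|\widetilde{x}(t_1)-\widetilde{x}(t_2)\|_X+\|\overline{f}(t_1)-\overline{f}(t_2)\|_{V^*}.
\]
Since $m-\beta>0$ and both $\widetilde{x}\in C(I;X)$ and $\overline{f}\in C(I;V^*)$, the right-hand side tends to $0$ as $t_2\to t_1$, so $u\in C(I;V)$, which completes the argument.

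The only genuinely delicate step is the contraction estimate: this is where the quasivariational (rather than purely variational) nature of (\ref{VarIneq-xFija}) makes itself felt, and it rests crucially on the smallness condition (\ref{m}) linking the strong-monotonicity constant $m$ of $A$ to the constant $\beta$ measuring the dependence of $j$ on its second slot. The remaining ingredients — well-posedness of the auxiliary variational inequality and the time-continuity estimate — are routine once the Lipschitz bounds in (\ref{A}) and (\ref{j}) and the continuity assumptions (\ref{f}), (\ref{pi}) are in hand; this is, in fact, exactly the content of Lemma 3.6 in \cite{LiSo2018}.
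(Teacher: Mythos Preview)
Your argument is correct and is precisely the standard route: a Banach fixed-point argument at each fixed $t$ (exploiting the smallness condition $m>\beta$) followed by the Lipschitz stability estimate in $t$. The paper itself does not give a proof of this lemma but simply invokes Lemma~3.6 of \cite{LiSo2018}; you have reconstructed exactly that argument, as you yourself note in the closing sentence.
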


{\color{black} We now complete the previous results with  the following comments.

\begin{remark}\label{rem0}
	
Under the assumptions of Lemma \ref{ExistUniq-P} it is easy to see that the quasi\-variational inequality	
(\ref{VarIneq-xFija}) is equivalent with the problem of finding a function $u:I\to V$ such that
\begin{equation}\label{rr1}
u(t)\in K,\qquad  G(t, u,v)\ge 0\quad\ \forall\, v\in K,\ t\in I
\end{equation}
where $G:I\times K\times K\to \mathbb{R}$ is the function defined by
\begin{equation*}
G(t,u,v)= \langle A(\widetilde{x}(t),u),v-u \rangle+j(\widetilde{x}(t),u,v)-j(\widetilde{x}(t),u,u)
-(\ft,\pi v-\pi u)_Z
 \end{equation*}
for all $t\in I$, $ u,\,v\in K$. Let $t\in I$ be fixed. Then, it is easy to see that
$G(t,u,u)=0$ and $G(t,u,\cdot):K\to \mathbb{R}$ is a convex lower semicontinuous function, for any $u\in X$. Moreover, 
\[G(t,u,v)+G(t,v,u)\le -(m-\beta)\|u-v\|_V^2\le 0\qquad\forall\, u,\ v\in K.\]
All these properties allows us to use Theorem 1 in \cite{BlOe1994} in order to prove the solvability of the equilibrium problem
$(\ref{rr1})$.
For more details, existence results and applications of equilibrium problems, we refer to \cite{ChChRi1999,ChChRi2000} as well as to the edited volume \cite{DaGiMa2003}.
\end{remark}
	
}
\medskip

We end this section with the following  version of the Weierstrass theorem.

\begin{teo}\label{Weierstrass}
Let $W$ be a reflexive Banach space endowed with the norm $\|\cdot\|_W$, $U$ a weakly closed  subset of  $W$ and $J:U\to \mathbb{R}$ a weakly lower semicontinuous function. Then $J$ is bounded from below and attains its infimum on $U$ whenever one of the following two conditions hold:
\begin{itemize}
\item[{\rm(i)}] $U$ is bounded;
\item[{\rm(ii)}] $J$ is coercive, i.e., $J(p)\to \infty$\ \  as\ \  $\|p\|_W\to \infty$.
\end{itemize}
\end{teo}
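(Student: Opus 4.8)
The plan is to apply the direct method of the calculus of variations. Since the case $U=\emptyset$ is trivial (the infimum equals $+\infty$ and there is nothing to prove), assume $U\neq\emptyset$ and set $m=\inf_{p\in U}J(p)$. Picking any $p_0\in U$ gives $m\le J(p_0)<+\infty$, so $m\in[-\infty,+\infty)$. First I would choose a minimizing sequence $\{p_n\}\subset U$, i.e. $J(p_n)\to m$.

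The key step is to show that $\{p_n\}$ is bounded in $W$. Under hypothesis (i) this is immediate, since $U$ itself is bounded. Under hypothesis (ii), suppose $\{p_n\}$ were unbounded; then, passing to a subsequence, $\|p_n\|_W\to\infty$, and coercivity of $J$ would force $J(p_n)\to+\infty$, contradicting $J(p_n)\to m<+\infty$. Hence in both cases $\{p_n\}$ is bounded.

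Since $W$ is reflexive, every bounded sequence has a weakly convergent subsequence, so there exist a subsequence, still denoted $\{p_n\}$, and an element $p^*\in W$ with $p_n\wl p^*$ in $W$. Because $U$ is weakly closed and $p_n\in U$ for all $n$, we deduce $p^*\in U$. Finally, the weak lower semicontinuity of $J$ yields
$$J(p^*)\le\liminf_{n\to\infty}J(p_n)=m.$$
On the other hand, $p^*\in U$ implies $J(p^*)\ge m$ by definition of $m$. Therefore $J(p^*)=m$; since $J$ is real-valued this also shows $m>-\infty$, so $J$ is bounded from below on $U$ and attains its infimum at $p^*$.

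There is no serious obstacle here — the argument is the classical one. The only points requiring a little care are disposing of the empty-set case so that $m<+\infty$ from the outset, and justifying the boundedness of the minimizing sequence under hypothesis (ii) before invoking reflexivity to extract a weakly convergent subsequence.
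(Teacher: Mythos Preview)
Your argument is correct and is precisely the classical direct method of the calculus of variations. The paper itself does not give a proof of this theorem but simply states that it ``can be found in many books and surveys, including \cite{SoMa2012}''; what you wrote is exactly the standard proof one finds in such references.
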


We shall use Theorem \ref{Weierstrass} in Section \ref{s4} in order to  establish the existence of at least one solution of optimal control problem. Its proof can be found in many books and surveys,  including \cite{SoMa2012}.

\section{A convergence result}\label{s3}
\setcounter{equation}0

The solution $(x,u)$ to problem $\mathcal{P}$ obtained in Theorem \ref{ExistUniq-P} depends on the data  $F$, $x_0$, $A$, $K$, $j$ and $f$.
In this section we  prove a convergence result that shows the continuous dependence of $(x,u)$ with the above-mentioned data. This  result will represent  a crucial ingblackient in the study of the optimal control problem that we shall study in Section \ref{s4}. To describe it,
for each $n\in\mathbb{N}$ we consider a function $F_n$, an initial data $x_{0n}$, a convex set $K_n$, an operator $A_n$ and  two functions $j_n$ and $f_n$  that satisfy the assumptions (\ref{F})--(\ref{f}), respectively, with constants $L_{Jn}$, $L_n'$, $L_n''$, $m_n$, $\alpha_n$ and $\beta_n$. To avoid any confusion, when used with $n$, we shall refer to  these assumptions as (\ref{F})$_n$ --(\ref{f})$_n$.
The sequences $\{L_{Jn}\}$, $\{L_n'\}$, $\{L_n''\}$, $\{m_n\}$, $\{\alpha_n\}$ are assumed to be bounded  and, therefore, without the loss of generality we assume that
\begin{equation}
L_{Jn}\leq L_J,\quad\; L_n'\leq L',\quad \; L_n''<L''\quad \; m_n\geq m,\quad \;  \alpha_n\leq \alpha,\quad\; \beta_n\leq\beta\quad\forall\, n\in\mathbb{N} \label{Const-Conv}
\end{equation}
where $L_{J},L',L'',m,\alpha,\beta$ are the constants associated with the assumptions (\ref{F})--(\ref{f}), respectively.
Then, for each $n\in\mathbb{N}$ we consider the  following problem.

\medskip

\noindent \textbf{Problem $\mathcal{P}_n$}. 
{\it  Find $x_n\in C^1(I;X)$ and $u_n\in C(I;V)$ such that}
\begin{align}
&\dot{x}_n(t)=F_n(t,x_n(t),u_n(t))\qquad \forall\, t\in I, \label{DiffEq-Pn}\\
&x_n(0)=x_{0n},\label{InitCond-Pn}\\
& u_n(t)\in K_n, \qquad  \langle  A_n(\xnt,\unt),v_n-\unt \rangle\nonumber \\
& \qquad\qquad \qquad\qquad+j_n(\xnt,\unt,v_n) -j_n(\xnt,\unt,\unt)\nonumber \\
&\qquad\qquad\qquad\qquad\geq (\fnt,\pi v_n-\pi\unt)_Z\;\quad \forall\,v_n\in K_n,\ t\in I. \label{VarIneq-Pn}
\end{align}

Note that,  if (\ref{F})$_n$-(\ref{f})$_n$ and (\ref{pi}) hold, Theorem \ref{ExistUniq-P} guarantees the existence of  a unique solution  of problem $\mathcal{P}_n$, denoted in what follows by $(x_n,u_n)$. We now consider the following additional assumptions.
\begin{eqnarray}
&&\hspace{-32mm}\left\{ \begin{array}{ll} {\rm For\ all\ } n\in\mathbb{N} {\rm \ there\ exists\ } \Gamma_n \geq 0, {\rm \ and\ } \gamma_n\geq 0
{\rm \ such\ that:\ }\\[2mm]
{\rm (a)\ } \|F_n(t,x,u)-F(t,x,u) \|_X\leq \Gamma_n\left( \|x \|_X+ \|u\|_V+\gamma_n \right)\\[1mm]
\qquad {\color{black} \forall\, t\in I},
x\in X,\; u\in V.\quad\\[2mm]
{\rm (b)\ } \lim\limits_{n\to\infty} \Gamma_n=0.\\[3mm]
{\rm (c)\ }{\rm The\ sequence\  }  \lbrace \gamma_n\rbrace \subset \mathbb{R} {\rm \ is\  bounded.}\\[2mm]
\end{array}\right. \label{F-Conv}
\end{eqnarray}
\begin{equation}\label{x0n-Conv}
x_{0n}\to x_0 {\rm \ \ in\ }\ X.
\end{equation}
\begin{eqnarray}
&&\left\{ \begin{array}{ll} \lbrace K_n\rbrace {\rm \  converges\ to\ } K {\rm \ in\ the\ sense\ of\ Mosco\  \cite{Mo1969},\ i.e.,:\ } \\[2mm]
{\rm (a)\ }  {\rm For\ each\ } v\in K {\rm \ there\ exists\ a\ sequence\ } \lbrace v_n \rbrace {\rm \ such\ that}\qquad\quad\qquad\qquad\quad\\[2mm]
\qquad v_n\in K_n\ \:\forall\, n\in\mathbb{N} {\rm \ \ and\ }\  v_n\to v{\rm \ \ in\  }\ V.\\[2mm]
{\rm (b)\ } {\rm For\ each\ } \lbrace v_n\rbrace {\rm \ such\ that\ }\\[2mm]
\qquad v_n\in K_n\;\, \forall\, n\in\mathbb{N}\ {\rm \ and\ }\ v_n\wl v{\rm \ \ in\ }\ V,{\rm \ we\ have\ } {\color{black}v\in K}.
\end{array}\right. \label{Kn-Conv}
\end{eqnarray}

\begin{eqnarray}
&&\hspace{-3mm}\left\{ \begin{array}{ll} {\rm For\ all\ } n\in\mathbb{N} {\rm \ there\ exists\ } \Lambda_n \geq 0, {\rm \ and\ } \lambda_n\geq 0
{\rm \ such\ that:\ }\\[2mm]
{\rm (a)\ } \|A_n(x,u)-A(x,u) \|_{V^*}\leq \Lambda_n\left( \|x \|_X+ \|u\|_V+\lambda_n \right)\quad \forall\, x\in X,\; u\in V.\ \,\quad\\[2mm]
{\rm (b)\ }  \lim\limits_{n\to\infty} \Lambda_n=0.\\[2mm]
{\rm (c)\ }{\rm The\ sequence\  }  \lbrace \lambda_n\rbrace \subset \mathbb{R} {\rm \ is\  bounded.}\\[2mm]
\end{array}\right. \label{A-Conv}
\end{eqnarray}


\begin{eqnarray}
&&\left\{ \begin{array}{ll} {\rm (a)\ For\ all\ } n\in\mathbb{N} {\rm \ there\ exists\ } \tau_n\geq 0 {\rm \ and\ } \delta_n\geq 0{\rm \ such\ that:}\\[2mm]
\qquad j_n(x,u,v_1)-j_n(x,u,v_2)\leq \left[\tau_n+\delta_n( \|x \|_X+ \| u\|_V) \right] \|v_1-v_2 \|_V\\[2mm]
\qquad \quad \forall\, x\in X,\ u\in V,\ v_1,v_2\in V.\\[2mm]
{\rm (b) \ There\ exists\ }\tau_0>0{\rm \ and\ } \delta_0>0{\rm \ such\ that\ } \tau_n\leq \tau_0 {\rm \ and\ } \delta_n\leq \delta_0 <m.\\[2mm]
{\rm (c)}  {\rm \ For\ any\ sequences\ } \lbrace u_n \rbrace\subset V,\ \lbrace v_n \rbrace\subset V{\rm \ such\ that }\\[2mm]
\qquad  \; u_n\wl u {\rm \ in\ }  V, \; v_n\wl v {\rm \ in\ } V\quad\rm{\ we\ have\ }\\[2mm]
\qquad \limsup\limits_{n\to\infty} \left[j_n(x,u_n,v_n)-j_n(x,u_n,u_n) \right]\leq j(x,u,v)-j(x,u,u)\quad \forall\, x\in X.
\end{array}\right. \label{jn-Conv}
\end{eqnarray}

\begin{eqnarray}
&&\left\{ \begin{array}{ll} {\rm (a)\  } \fnt \wl \ft{\rm \ \ in\ }\ Z {\rm \ as\ }n\to\infty\quad \forall\,t\in I;\qquad\qquad\qquad\qquad\qquad\\[2mm]
{\rm (b)\ } {\rm For\ any\ compact\ set\ }J\subset I{\rm \ there\ exists\ } w_J>0{\rm \ such\ that}\qquad\qquad\;\;\quad\\[2mm]
\qquad \| f_n(t)\|_Z\leq w_J\quad \forall\, n\in\mathbb{N}, \ t\in J.
\end{array}\right. \label{fn-Conv}
\end{eqnarray}

\begin{eqnarray}
&&\left\{ \begin{array}{ll} {\rm \ For\ any\ sequence\  } \lbrace v_n\rbrace\subset V {\rm \ such\ that\ }\quad\qquad\qquad \qquad\qquad\qquad\qquad\qquad \\[2mm]
\qquad v_n \wl v{\rm \ \ in\ }\ V\ \ {\rm \ we \ have\ }\ \ \pi v_n\to\pi v{\rm \ \ in\ }\ Z.
\end{array}\right.\label{pi-Conv}
\end{eqnarray}

\medskip

Our main result of this section is the following.

\begin{teo}\label{t1}
	Assume  $(\ref{F})$--$(\ref{pi})$ and  $(\ref{F})_n$--$(\ref{f})_n$, for each $n\in\mathbb{N}$. Moreover, assume $(\ref{Const-Conv})$ and $(\ref{F-Conv})$--$(\ref{pi-Conv})$. Then, the  solution $(x_n,u_n)$ of Problem $\mathcal{P}_n$ converges   to the solution $(x,u)$ of Problem $\mathcal{P}$ as $n\to\infty$, i.e., for each $t\in I$ we have
	\begin{equation}\label{ez}
	\unt\to \ut\quad {\rm in\ }\ V\quad {\rm \ and \ }\quad \xnt\to\xt\quad {\rm in\ }\ X \qquad{\rm as\ }n\to\infty.
	\end{equation}
\end{teo}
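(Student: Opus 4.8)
The plan is to break the coupling between the differential equation and the quasivariational inequality by using that, at each fixed $t$, the inequality (\ref{VarIneq-Pn}) determines $u_n(t)$ from $x_n(t)$ alone, and then to run a Gronwall argument on the difference $x_n-x$. I would proceed in three steps, after which I isolate the main difficulty.

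\textbf{Step 1 (uniform a priori bounds).} First I would show that for every compact $J\subset I$ there is $M_J>0$, independent of $n$, with $\|x_n(t)\|_X+\|u_n(t)\|_V\le M_J$ on $J$. Testing (\ref{VarIneq-Pn}) with $v_n\in K_n$ obtained from a fixed $v_0\in K$ via Mosco (\ref{Kn-Conv})(a), and using the strong monotonicity (\ref{A})$_n$(c), the bounds (\ref{A-Conv})(a), (\ref{A})(a)--(b), the growth condition (\ref{jn-Conv})(a)--(b) (where $\delta_0<m$ lets the arising quadratic term be absorbed) and (\ref{fn-Conv})(b), gives $\|u_n(t)\|_V\le c(1+\|x_n(t)\|_X)$ on $J$. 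Inserting this into $x_n(t)=x_{0n}+\int_0^t F_n(s,x_n(s),u_n(s))\,ds$, using (\ref{F})$_n$(b), (\ref{F-Conv})(a) with $x=u=0$, (\ref{x0n-Conv}) and Gronwall's inequality bounds $\|x_n(t)\|_X$, hence $\|u_n(t)\|_V$.

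\textbf{Step 2 (convergence of the quasivariational inequality at a frozen argument).} For $t\in I$ fixed let $\mathcal S_n(t,\cdot):X\to V$ and $\mathcal S(t,\cdot):X\to V$ denote the solution maps of the elliptic quasivariational inequalities obtained by freezing the first argument in (\ref{VarIneq-Pn}) and (\ref{VarIneq-P}); these are well posed for each fixed $t$ as in Lemma \ref{ExistUniq-VarIneq}, and by construction $u_n(t)=\mathcal S_n(t,x_n(t))$, $u(t)=\mathcal S(t,x(t))$. I claim: if $z_n\to z$ in $X$ then $\mathcal S_n(t,z_n)\to\mathcal S(t,z)$ in $V$. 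Put $w_n=\mathcal S_n(t,z_n)$. By Step 1, $\{w_n\}$ is bounded in the reflexive space $V$, so $w_n\wl w$ along a subsequence, and $w\in K$ by Mosco (\ref{Kn-Conv})(b). To identify $w$ I would use a Minty-type reformulation: by the monotonicity (\ref{A})$_n$(c) one may replace $A_n(z_n,w_n)$ by $A_n(z_n,v_n)$ (with $v_n\in K_n$, $v_n\to v$ for an arbitrary $v\in K$, from Mosco (\ref{Kn-Conv})(a)) in the term carrying the unfavourable sign, obtaining an inequality in which every term passes to the limit: $A_n(z_n,v_n)\to A(z,v)$ in $V^*$ by (\ref{A-Conv})(a) and (\ref{A})(a)--(b); the $f_n$-term converges because $f_n(t)\wl f(t)$ in $Z$ while $\pi v_n-\pi w_n\to\pi v-\pi w$ in $Z$ by (\ref{pi}) and (\ref{pi-Conv}); and the $j_n$-term is controlled by first moving its first argument from $z_n$ to $z$ using (\ref{j})$_n$(b) (the error vanishes since $z_n\to z$ and $\{w_n-v_n\}$ is bounded), then applying the $\limsup$ condition (\ref{jn-Conv})(c). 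This yields the Minty form of the limiting inequality for $w$, and a standard convexity argument (test with $(1-\theta)w+\theta v$, divide by $\theta$, let $\theta\downarrow 0$, using the continuity (\ref{A})(b)) turns it into the genuine quasivariational inequality solved by $\mathcal S(t,z)$; by uniqueness $w=\mathcal S(t,z)$, and, the limit being subsequence-independent, $w_n\wl\mathcal S(t,z)$. For the strong convergence I would test the inequality for $w_n$ with $v_n\in K_n$, $v_n\to\mathcal S(t,z)$ in $V$, and use (\ref{A})$_n$(c): $m\|w_n-v_n\|_V^2\le m_n\|w_n-v_n\|_V^2$ is bounded above by $[\,j_n(z_n,w_n,v_n)-j_n(z_n,w_n,w_n)\,]$ plus terms tending to $0$; by (\ref{jn-Conv})(c) (after the same shift of the first argument) the $\limsup$ of the bracket is $\le j(z,\mathcal S(t,z),\mathcal S(t,z))-j(z,\mathcal S(t,z),\mathcal S(t,z))=0$, hence $\|w_n-v_n\|_V\to 0$ and $w_n\to\mathcal S(t,z)$ in $V$.

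\textbf{Step 3 (the differential equation and conclusion).} Fix $T\in I$, write $J=[0,T]$, $e_n(t)=\|x_n(t)-x(t)\|_X$ and $g_n(t)=\|u_n(t)-u(t)\|_V$. Comparing $u_n(t)=\mathcal S_n(t,x_n(t))$ with $\mathcal S_n(t,x(t))$ through the internal stability estimate of the $n$-th inequality (strong monotonicity (\ref{A})$_n$(c), the bounds (\ref{A})$_n$(a), (\ref{j})$_n$(b), and $m_n-\beta_n\ge m-\beta>0$ from (\ref{Const-Conv})) and applying Step 2 with the constant sequence $z_n\equiv x(t)$ gives
\[
g_n(t)\ \le\ \tfrac{L'+\alpha}{m-\beta}\,e_n(t)+\rho_n(t),\qquad \rho_n(t):=\|\mathcal S_n(t,x(t))-u(t)\|_V\to 0\ \text{ for each }t\in J,
\]
where $\rho_n$ is bounded on $J$ uniformly in $n$ by Step 1, so $\int_0^T\rho_n(s)\,ds\to 0$ by bounded convergence. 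Subtracting the integral forms of (\ref{DiffEq-Pn}) and (\ref{DiffEq-P}), splitting $F_n(s,x_n,u_n)-F(s,x,u)$ as $[F_n(s,x_n,u_n)-F(s,x_n,u_n)]+[F(s,x_n,u_n)-F(s,x,u)]$ and using (\ref{F-Conv})(a), (\ref{F})(b) together with the displayed bound, I obtain, for $t\in J$,
\[
e_n(t)\le \|x_{0n}-x_0\|_X+\Gamma_n\!\int_0^T(\|x_n\|_X+\|u_n\|_V+\gamma_n)\,ds+L_J\!\int_0^T\rho_n(s)\,ds+L_J\big(1+\tfrac{L'+\alpha}{m-\beta}\big)\!\int_0^t e_n(s)\,ds .
\]
Gronwall's inequality, with (\ref{x0n-Conv}), (\ref{F-Conv})(b)--(c), Step 1 and $\int_0^T\rho_n\to0$, gives $\sup_{J}e_n\to 0$; feeding this back into the bound for $g_n$ and using $\rho_n(t)\to0$ yields $g_n(t)\to0$ for every $t\in I$, which is (\ref{ez}).

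\textbf{Where the difficulty lies.} The crux is Step 2: the non-differentiable term is controlled only by the one-sided condition (\ref{jn-Conv})(c), the convex sets converge merely in the sense of Mosco (so no quantitative comparison of $u_n(t)$ with $u(t)$ is available and one must argue through weak subsequential limits), and the first argument of $A_n$ and $j_n$ is itself a perturbed, only bounded object. Reconciling these is exactly what forces the Minty-type reformulation for the weak limit, the careful use of (\ref{jn-Conv})(c) for the strong limit, and the decoupling of the system by freezing the first argument, which is what makes the Gronwall argument of Step 3 possible.
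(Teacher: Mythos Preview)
Your proof is correct and follows essentially the same route as the paper: the paper's auxiliary function $\widetilde u_n(t)$ is precisely your $\mathcal S_n(t,x(t))$, and its Lemmas~3.1--3.3 are your Step~2 (boundedness, weak limit via the Minty reformulation, then strong convergence from strong monotonicity), after which the same internal stability estimate $\|u_n-\widetilde u_n\|_V\le\frac{L'+\alpha}{m-\beta}\|x_n-x\|_X$ feeds the Gronwall argument. The only cosmetic difference is that the paper splits $F-F_n$ at the limiting point $(x(s),u(s))$ rather than at $(x_n(s),u_n(s))$, so it never needs your Step~1 a~priori bound on $(x_n,u_n)$; your version works too, it just does a little extra bookkeeping.
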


The proof of Theorem \ref{t1} will be carried out in several steps. To present it, everywhere in what follows we assume that the hypotheses of   Theorem \ref{t1} are satisfied, even if we do not mention it explicitly. Moreover, for each $n\in\mathbb N$ we  consider the following  auxiliary problem in which, recall, $x\in C^1(I;X)$ is the first component of the solution
$(x,u)$ of Problem $\mathcal{P}$.

\medskip

\noindent \textbf{Problem $\mathcal{\widetilde{P}}_n$}. 
{\it  Find $\widetilde{u}_n \in C(I;V)$ such that}
\begin{align}
& \until\in K_n, \ \ \langle  A_n(\xt,\until),v_n-\until \rangle+j_n(\xt,\until,v_n)\label{VarIneq-PnTilde} -j_n(\xt,\until,\until) \nonumber\\
& \qquad \qquad \quad\qquad \geq (\fnt,\pi v_n-\pi\until)_Z\qquad \forall\,v_n\in K_n,\ t\in I. 
\end{align}

\medskip

The first step of the proof is the following.

\medskip

\begin{lema} \label{until-bounded} For each $n\in\mathbb{N}$, Problem $\mathcal{\widetilde{P}}_n$ has a unique solution $\widetilde{u}_n\in C(I;V)$. Moreover, for each  compact subset $J\subset I$, there exists $\widetilde{C}_J>0$ such that
\begin{equation}\label{xx}
\|\until\|_V\leq \widetilde{C}_J, \qquad \forall\,t\in J,\ n\in\mathbb{N}.
\end{equation}

\end{lema}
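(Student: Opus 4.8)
First I would establish existence and uniqueness of the solution $\widetilde u_n$ to Problem $\mathcal{\widetilde P}_n$. This is immediate from Lemma \ref{ExistUniq-VarIneq}: for each fixed $n$, the data $A_n$, $j_n$, $f_n$, $K_n$ satisfy the assumptions $(\ref{K})_n$--$(\ref{f})_n$, and in particular $(\ref{A})_n$(c) with constant $m_n\ge m>\beta\ge\beta_n$, so the hypothesis $m_n>\beta_n$ holds. Since $x\in C^1(I;X)$ is a fixed function (the first component of the solution of $\mathcal P$), applying Lemma \ref{ExistUniq-VarIneq} with $\widetilde x(t)=x(t)$ and with the subscript-$n$ data gives a unique $\widetilde u_n\in C(I;V)$ solving $(\ref{VarIneq-PnTilde})$.

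The substantive part is the uniform bound $(\ref{xx})$. Fix a compact set $J\subset I$ and $t\in J$. The plan is to test the inequality $(\ref{VarIneq-PnTilde})$ with a suitable fixed element of $K_n$. By the Mosco convergence $(\ref{Kn-Conv})$(a), pick any $v_0\in K$ (which is nonempty) and a sequence $v_n\in K_n$ with $v_n\to v_0$ in $V$; then $\{v_n\}$ is bounded in $V$, say $\|v_n\|_V\le c_1$ for all $n$. Taking $v_n$ as the test function in $(\ref{VarIneq-PnTilde})$ yields
\[
\langle A_n(x(t),\until),\until-v_n\rangle \le j_n(x(t),\until,v_n)-j_n(x(t),\until,\until)-(\fnt,\pi\until-\pi v_n)_Z.
\]
On the left, write $\langle A_n(x(t),\until),\until-v_n\rangle = \langle A_n(x(t),\until)-A_n(x(t),v_n),\until-v_n\rangle + \langle A_n(x(t),v_n),\until-v_n\rangle$ and use strong monotonicity $(\ref{A})_n$(c) to get the coercive term $m_n\|\until-v_n\|_V^2\ge m\|\until-v_n\|_V^2$. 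The remaining terms are estimated as follows: $\|A_n(x(t),v_n)\|_{V^*}$ is bounded on $J$ using $(\ref{A})_n$(b), $(\ref{A-Conv})$ (to compare $A_n$ with $A$), the continuity of $t\mapsto x(t)$ on the compact $J$ (so $\|x(t)\|_X\le c_2$ there), boundedness of $\{\lambda_n\}$, and Lipschitz continuity of $A$ in its second argument; the $j_n$ difference is controlled by $(\ref{jn-Conv})$(a),(b) as $[\tau_0+\delta_0(c_2+\|\until\|_V)]\,\|v_n-\until\|_V$; and the $Z$-term is bounded by $c_0 w_J\|\until-v_n\|_V$ via $(\ref{pi})$ and $(\ref{fn-Conv})$(b). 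Collecting everything, one obtains an inequality of the form
\[
m\|\until-v_n\|_V^2 \le \big(C_1 + \delta_0\|\until\|_V\big)\,\|\until-v_n\|_V
\]
with $C_1$ depending only on $J$ (through $c_1,c_2,w_J,\tau_0$ and the uniform constants), and where I use $\|\until\|_V\le \|\until-v_n\|_V + c_1$ inside the $\delta_0$ term. Since $\delta_0<m$, dividing by $\|\until-v_n\|_V$ (the bound being trivial otherwise) gives $(m-\delta_0)\|\until-v_n\|_V \le C_1 + \delta_0 c_1$, hence a bound on $\|\until-v_n\|_V$ independent of $n$ and $t\in J$, and therefore $\|\until\|_V\le \widetilde C_J$ for all $t\in J$, $n\in\mathbb N$.

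The main obstacle, and the point requiring care, is the bookkeeping in the estimate of the operator term $\|A_n(x(t),v_n)\|_{V^*}$: one must peel off $A_n-A$ via $(\ref{A-Conv})$(a) (getting $\Lambda_n(\|x(t)\|_X+\|v_n\|_V+\lambda_n)$, uniformly bounded because $\Lambda_n\to0$ and $\{\lambda_n\}$ is bounded), then bound $\|A(x(t),v_n)\|_{V^*}$ using $(\ref{A})$(a),(b) relative to a fixed reference pair, all uniformly over the compact $J$. One must also keep track that every intermediate constant depends only on $J$ and on the global constants in $(\ref{Const-Conv})$, not on $n$; the hypotheses have been arranged precisely so that this works, so the argument is routine once set up correctly. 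An entirely analogous estimate — indeed a special case — underlies Lemma \ref{ExistUniq-VarIneq}, so no genuinely new idea beyond careful uniformization is needed.
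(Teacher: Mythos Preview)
Your proposal is correct and follows essentially the same approach as the paper: both pick a fixed $v_0\in K$, use Mosco convergence to obtain bounded test elements $v_n\in K_n$, test $(\ref{VarIneq-PnTilde})$ with these, extract the coercive term via strong monotonicity, estimate $\|A_n(x(t),v_n)\|_{V^*}$ by peeling off $A_n-A$ through $(\ref{A-Conv})$ and then using the Lipschitz properties of $A$, control the $j_n$ and $f_n$ terms via $(\ref{jn-Conv})$(a),(b) and $(\ref{fn-Conv})$(b), and finally absorb the $\delta_0\|\until\|_V$ contribution using $\delta_0<m$. The bookkeeping you flag as the main care point is exactly where the paper spends its effort as well.
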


\begin{proof}
The existence and uniqueness of solution to problem $\mathcal{\widetilde{P}}_n$ is derived straightforward from  Lemma \ref{ExistUniq-VarIneq}.

Assume now that  $J\subset I$ is a given compact and let $t\in J$, $u_0\in K$. Using (\ref{Kn-Conv}) there exists a sequence $\lbrace u_{0n}\rbrace$ such that
$$u_{0n}\in K_n\quad \forall\, n\in\mathbb{N}\quad {\rm\ and\  } \quad u_{0n}\to u_0\  {\rm \ in\ \ }V.$$
Let $n\in\mathbb{N}$ be fixed and take $v_n=u_{0n}\in K_n$ in (\ref{VarIneq-PnTilde})
to obtain
\begin{align}
& \la  A_n(\xt,\until),\until-u_{0n} \ra\leq j_n(\xt,\until,u_{0n}) \nonumber\\
&\qquad-j_n(\xt,\until,\until)+ (\fnt,\pi\until-\pi u_{0n})_Z \nonumber
\end{align}
and, therefore,
\begin{align}
& \la  A_n(\xt,\until)-A_n(\xt,u_{0n}),\until-u_{0n} \ra \leq \la A_n(\xt,u_{0n}),u_{0n}-\until \ra  \nonumber\\
&\qquad + j_n(\xt,\until,u_{0n}) -j_n(\xt,\until,\until)+ (\fnt,\pi\until-\pi u_{0n})_Z. \nonumber
\end{align}
Then, using (\ref{A})$_n$(c) and  conditions (\ref{jn-Conv})(a) and (\ref{pi}) we find that
\begin{align}
& m_n \|\until-u_{0n} \|_V \leq \| A_n(\xt,u_{0n})\|_{V^*}   \nonumber\\
&\qquad+ \tau_n+\delta_n\left( \|\xt\|_X +\|\until\|_V \right)  + c_0\|\fnt\|_Z. \label{Anu0n-1}
\end{align}
Now, since
\begin{align*}
 \| A_n(\xt,u_{0n})\|_{V^*}&\leq \|A_n(\xt,u_{0n})-A(\xt,u_{0n}) \|_{V^*}\\
& +\|A(\xt,u_{0n})-A(\xt,u_0) \|_{V^*}+\|A(\xt,u_{0}) \|_{V^*}
\end{align*}
from assumptions (\ref{A-Conv}) and (\ref{A})(b) we obtain that
\begin{align}
\| A_n(\xt,u_{0n})\|_{V^*} &\leq  \Lambda_n\left(\|\xt \|_X +\| u_{0n}\|_V+\lambda_n \right)\nonumber\\
&+L'' \|u_{0n}-u_0 \|_V +\|A(\xt,u_{0}) \|_{V^*}. \label{Anuon-2}
\end{align}
Recall now  that conditions (\ref{Const-Conv}) and (\ref{jn-Conv})(b) guarantee that   $m_n\geq m$, $\tau_n\leq \tau_0$ and {\color{black}$\delta_n\leq \delta_0<m$}.  As $\|\until\|_V\leq \|\until-u_{0n}\|_V+\|u_{0n}\|_V$, combining  inequalities (\ref{Anu0n-1}) and (\ref{Anuon-2}) it follows that
\begin{align}
& \|\until-u_{0n}\|_V\leq \dfrac{1}{m-\delta_0} \Big\{\Lambda_n \left(\|\xt \|_X +\| u_{0n}\|_V+\lambda_n \right)+L'' \|u_{0n}-u_0 \|_V \nonumber\\
& \qquad +\|A(\xt,u_{0}) \|_{V^*}+ \tau_0+\delta_0\left( \|\xt\|_X +\|u_{0n}\|_V \right)+ c_0\|\fnt\|_Z  \Big\}.\label{cotauntilde}
\end{align}

\medskip

Next, since $u_{0n}\to u$, there exists $M>0$ which does not depend on $n$ such that
$\|u_{0n}-u_0 \|_V\leq M$. Consequently,
\begin{equation}\label{r1}
 \|u_{0n}\|_V\leq M+\| u_{0}\|_V.
 \end{equation}

On the other hand, from assumptions (\ref{A-Conv})(b),(c), we know that $\Lambda_n\to 0$ and $\{\lambda_n\}\subset\mathbb{R}$ in bounded. Therefore, there exists $\Lambda_0>0$ and $\lambda_0>0$ such that
\begin{equation}\label{r2}
\Lambda_n\leq \Lambda_0\qquad {\rm \ and \ }\qquad \lambda_n\leq \lambda_0. 
\end{equation}

In addition, since $x\in C^1(I;X)$, there exists $M_J>0$ which does not depend on $t$ such that
\begin{equation}\label{r3}
\|x(t)\|_X\leq M_J.
\end{equation}

Moreover, taking into account (\ref{A})(a) we get
\begin{align}
&\|A(\xt,u_{0}) \|_{V^*}\leq \|A(\xt,u_{0})-A(x_0,u_0) \|_{V^*}+\|A(x_0,u_{0}) \|_{V^*} \nonumber\\
&\qquad \leq L'\|\xt-x_0\|_X+\|A(x_0,u_{0}) \|_{V^*} \leq L'(M_J+\|x_0\|_X)+\|A(x_0,u_{0}) \|_{V^*}\label{r4}
\end{align}

Finally, from condition (\ref{fn-Conv})(b), there exists a constant $w_{J}>0$ which does not depend on $n$ and $t$ such that 
\begin{equation}\label{r5}
\|\fnt\|_Z\leq w_J.
\end{equation}

Therefore, from (\ref{cotauntilde})--(\ref{r5}) we deduce that 
\begin{align*}
& \|\until-u_{0n}\|_V\leq \dfrac{1}{m-\delta_0} \Big\{\Lambda_0 \left(M_J +M+\| u_{0}\|_V+\lambda_0 \right)+L'' M+L'(M_J+\|x_0\|_X) \nonumber\\
& \qquad +\|A(x_0,u_{0}) \|_{V^*}+ \tau_0+\delta_0\left( M_J +M+\|u_{0}\|_V \right)+ c_0w_J \Big\}.
\end{align*}
Defining now $C_J$ as the right hand side of the previous inequality we get that
$$\|\until\|_V\leq C_J+ \|u_{0n}\|_V.$$
As a result we deduce (\ref{xx}) with $\widetilde{C}_J=C_J+ M+\|u_{0}\|_V$,
which concludes the proof.
\end{proof}

The second step of the proof is the following.

\begin{lema}\label{Lem:until->u}  For each $t\in I$ the following weak convergence holds:
\begin{equation}\label{er}
\until\wl \ut\; {\rm \ \ in \ \ }V\  {\rm \ as \ }n\to\infty.
\end{equation}
\end{lema}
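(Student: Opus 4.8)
The plan is to fix $t\in I$ and show that the bounded sequence $\{\widetilde u_n(t)\}$ (bounded by Lemma~\ref{until-bounded}, applied to the compact set $J=\{t\}$ or any compact $J$ containing $t$) has $u(t)$ as its unique weak limit point, which, together with boundedness in the reflexive space $V$, forces the whole sequence to converge weakly to $u(t)$. So let $\{\widetilde u_{n'}(t)\}$ be an arbitrary subsequence; by reflexivity it has a further subsequence, still denoted $\{\widetilde u_{n'}(t)\}$ for simplicity, with $\widetilde u_{n'}(t)\wl \xi$ in $V$ for some $\xi\in V$. By the Mosco convergence assumption (\ref{Kn-Conv})(b), since $\widetilde u_{n'}(t)\in K_{n'}$ and $\widetilde u_{n'}(t)\wl \xi$, we get $\xi\in K$. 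The goal is then to prove that $\xi$ satisfies the variational inequality (\ref{VarIneq-P}) at time $t$ (with $x(t)$ as the first argument), because then uniqueness in Lemma~\ref{ExistUniq-VarIneq} (or directly the uniqueness of the solution component $u$ of Problem $\mathcal P$) identifies $\xi=u(t)$.

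To pass to the limit in (\ref{VarIneq-PnTilde}), I would fix an arbitrary $v\in K$ and use (\ref{Kn-Conv})(a) to pick $v_{n'}\in K_{n'}$ with $v_{n'}\to v$ strongly in $V$. Insert $v_{n'}$ as the test function in (\ref{VarIneq-PnTilde}) at time $t$:
\begin{align*}
\langle A_{n'}(x(t),\widetilde u_{n'}(t)),v_{n'}-\widetilde u_{n'}(t)\rangle
&+ j_{n'}(x(t),\widetilde u_{n'}(t),v_{n'}) - j_{n'}(x(t),\widetilde u_{n'}(t),\widetilde u_{n'}(t))\\
&\geq (f_{n'}(t),\pi v_{n'}-\pi\widetilde u_{n'}(t))_Z.
\end{align*}
Now I would estimate each term. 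For the operator term, write $A_{n'}(x(t),\widetilde u_{n'}(t)) = \big(A_{n'}(x(t),\widetilde u_{n'}(t))-A(x(t),\widetilde u_{n'}(t))\big) + A(x(t),\widetilde u_{n'}(t))$; the first bracket is $O(\Lambda_{n'})$ by (\ref{A-Conv})(a) using the boundedness of $\{\widetilde u_{n'}(t)\}$ and $\{\lambda_{n'}\}$, hence $\to 0$ in $V^*$, while $A(x(t),\cdot)$ is Lipschitz and strongly monotone, so the standard pseudomonotonicity trick applies: use $\langle A(x(t),\widetilde u_{n'}(t)),\widetilde u_{n'}(t)-\xi\rangle \geq \langle A(x(t),\xi),\widetilde u_{n'}(t)-\xi\rangle$ and the fact that $\langle A(x(t),\xi),\widetilde u_{n'}(t)-\xi\rangle\to 0$ by weak convergence to handle $\limsup$. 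For the $j$-terms, split $j_{n'}$ using (\ref{jn-Conv})(a) to move from argument $v_{n'}$ to $v$ (the difference is controlled by $[\tau_{n'}+\delta_{n'}(\|x(t)\|_X+\|\widetilde u_{n'}(t)\|_V)]\|v_{n'}-v\|_V\to 0$), and then use (\ref{jn-Conv})(c) to get $\limsup_{n'} [j_{n'}(x(t),\widetilde u_{n'}(t),v)-j_{n'}(x(t),\widetilde u_{n'}(t),\widetilde u_{n'}(t))]\leq j(x(t),\xi,v)-j(x(t),\xi,\xi)$. For the right-hand side, $\pi v_{n'}-\pi\widetilde u_{n'}(t)\to \pi v-\pi\xi$ strongly in $Z$ by (\ref{pi}) applied to $v_{n'}\to v$ and by (\ref{pi-Conv}) applied to $\widetilde u_{n'}(t)\wl\xi$, while $f_{n'}(t)\wl f(t)$ in $Z$ by (\ref{fn-Conv})(a); strong $\times$ weak convergence gives $(f_{n'}(t),\pi v_{n'}-\pi\widetilde u_{n'}(t))_Z\to (f(t),\pi v-\pi\xi)_Z$.

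Assembling these limits carefully (taking $\limsup$ on the left, liminf/limit on the right of the inequality, and being careful that the operator term enters with a sign that makes the pseudomonotonicity estimate go the right way — i.e. rewrite the inequality as $\langle A_{n'}(x(t),\widetilde u_{n'}(t)),\widetilde u_{n'}(t)-v_{n'}\rangle \leq j_{n'}(\cdots,v_{n'})-j_{n'}(\cdots,\widetilde u_{n'}(t)) - (f_{n'}(t),\pi v_{n'}-\pi\widetilde u_{n'}(t))_Z$ and take $\liminf$ on the left, $\limsup$ on the right) yields
\[
\langle A(x(t),\xi),\xi-v\rangle \leq j(x(t),\xi,v)-j(x(t),\xi,\xi) - (f(t),\pi v-\pi\xi)_Z,
\]
i.e. $\xi$ solves (\ref{VarIneq-xFija}) at time $t$ with $\widetilde x=x$. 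Since $x$ is exactly the first component of the solution of Problem $\mathcal P$, the unique solution of that inequality is $u(t)$, so $\xi=u(t)$. As the subsequence was arbitrary and the limit is always $u(t)$, the whole sequence satisfies $\widetilde u_n(t)\wl u(t)$ in $V$, which is (\ref{er}).

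The main obstacle I anticipate is the operator term: one must combine the perturbation estimate for $A_{n'}-A$ with the monotonicity argument for $A$ in such a way that weak convergence of $\widetilde u_{n'}(t)$ suffices — in particular making sure the inequality is arranged so that $\liminf_{n'}\langle A(x(t),\widetilde u_{n'}(t)),\widetilde u_{n'}(t)-\xi\rangle \geq 0$ is what is needed, which follows from $\langle A(x(t),\widetilde u_{n'}(t))-A(x(t),\xi),\widetilde u_{n'}(t)-\xi\rangle\geq m\|\widetilde u_{n'}(t)-\xi\|_V^2\geq 0$ together with $\langle A(x(t),\xi),\widetilde u_{n'}(t)-\xi\rangle\to 0$. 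A secondary technical point is keeping all the boundedness constants uniform in $n$ (which is precisely what Lemma~\ref{until-bounded} and assumptions (\ref{Const-Conv}), (\ref{A-Conv})(c), (\ref{jn-Conv})(b), (\ref{fn-Conv})(b) are for), so that every perturbation term genuinely vanishes as $n'\to\infty$.
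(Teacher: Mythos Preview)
Your overall architecture is correct and matches the paper's: extract a weakly convergent subsequence using Lemma~\ref{until-bounded} and reflexivity, place the weak limit $\xi$ in $K$ via (\ref{Kn-Conv})(b), pass to the limit in (\ref{VarIneq-PnTilde}) tested against a recovery sequence $v_{n'}\to v$, identify $\xi=u(t)$ by uniqueness, and conclude by the standard subsequence argument. Your treatment of the $j_{n'}$-terms, the right-hand side, and the $A_{n'}-A$ perturbation is also in line with the paper.

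The gap is in the operator term. You assert that what is needed is $\liminf_{n'}\langle A(x(t),\widetilde u_{n'}(t)),\widetilde u_{n'}(t)-\xi\rangle\ge 0$, and you derive this from strong monotonicity. But this is the \emph{trivial} direction and it does not let you replace $A(x(t),\widetilde u_{n'}(t))$ by $A(x(t),\xi)$ in the limit. Concretely, after splitting off the $A_{n'}-A$ and $v-v_{n'}$ corrections you are left with
\[
\langle A(x(t),\widetilde u_{n'}(t)),\widetilde u_{n'}(t)-v\rangle
=\langle A(x(t),\widetilde u_{n'}(t)),\widetilde u_{n'}(t)-\xi\rangle
+\langle A(x(t),\widetilde u_{n'}(t)),\xi-v\rangle.
\]
The first piece has nonnegative $\liminf$, but the second does not converge: $A(x(t),\widetilde u_{n'}(t))$ is only bounded in $V^*$ and $\xi-v$ is a fixed element of $V$, so nothing identifies the limit as $\langle A(x(t),\xi),\xi-v\rangle$. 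Hence your claimed inequality $\langle A(x(t),\xi),\xi-v\rangle\le j(x(t),\xi,v)-j(x(t),\xi,\xi)-(\ft,\pi v-\pi\xi)_Z$ does not follow from what you wrote.

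There are two standard repairs. The paper's route is the Minty linearization: monotonicity gives $\langle A(x(t),v),\widetilde u_{n'}(t)-v\rangle\le\langle A(x(t),\widetilde u_{n'}(t)),\widetilde u_{n'}(t)-v\rangle$, and since the left side converges (a fixed element of $V^*$ against a weakly convergent sequence) one first obtains the inequality with $A(x(t),v)$ on the left for every $v\in K$; one then recovers $A(x(t),\xi)$ by taking $v=\xi+\theta(w-\xi)\in K$, using convexity of $j(x(t),\xi,\cdot)$, dividing by $\theta>0$, and letting $\theta\to 0$ via (\ref{A})(b). The alternative, closer to your sketch, is to run pseudomonotonicity properly: \emph{first} test (\ref{VarIneq-PnTilde}) with a recovery sequence $v_{n'}\to\xi$ (legitimate since $\xi\in K$) and use your limits for the $j$- and $f$-terms to obtain $\limsup_{n'}\langle A(x(t),\widetilde u_{n'}(t)),\widetilde u_{n'}(t)-\xi\rangle\le 0$; \emph{then} pseudomonotonicity of the Lipschitz monotone operator $A(x(t),\cdot)$ yields $\liminf_{n'}\langle A(x(t),\widetilde u_{n'}(t)),\widetilde u_{n'}(t)-v\rangle\ge\langle A(x(t),\xi),\xi-v\rangle$ for all $v$, which closes your argument. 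Either fix is short; the point is that the $\limsup\le 0$ step --- not the $\liminf\ge 0$ step --- is the one that requires the variational inequality and hence the work.
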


\begin{proof}
Let $t\in I$ and consider a compact set $J\subset I$ such that $t\in J$. Using Lemma \ref{until-bounded} we  obtain that there exists an element $\widetilde{u}(t)\in V$ and a subsequence of $\lbrace \until\rbrace$, still denoted by $\lbrace \until\rbrace$, such that $\until\wl \util$ in $V$ as $n\to\infty$. Recalling assumption (\ref{Kn-Conv}), since $\until\in K_n$\ $\forall\, n\in\mathbb{N}$, we deduce that $\util\in K$.

We now prove that $\util=\ut$
{\color{black} and, by the uniqueness of the solution
to (\ref{VarIneq-P}),
it is enough to show that $\util$ is a solution to inequality (\ref{VarIneq-P})}.
To this end we consider an element $v\in K$ and use (\ref{Kn-Conv}) to find  that there exists a sequence $\lbrace v_n\rbrace\subset V$ such that 
$$v_n\in K_n\quad \forall\, n\in\mathbb{N}\quad {\rm \ and \ }\quad v_n\to v\ \  {\rm \ in \ }V.$$
We now use (\ref{VarIneq-PnTilde}) to obtain
\begin{align}
 \la  A_n(\xt,\until),\until-v_n \ra&\leq j_n(\xt,\until,v_n) -j_n(\xt,\until,\until)\nonumber \\
& + (\fnt,\pi\until-\pi v_n)_Z. \nonumber
\end{align}
Next, writing
$${\color{black} A(\xt,\until)=A(\xt,\until)-A_n(\xt,\until)+A_n(\xt,\until)  }$$ we find that
\begin{align}
 \la  A(\xt,\until),\until-v_n \ra& \leq \la A_n(\xt,\until)-A(\xt,\until),v_n-\until\ra\nonumber\\
&+  j_n(\xt,\until,v_n) -j_n(\xt,\until,\until)\nonumber \\
& + (\fnt,\pi\until-\pi v_n)_Z.\nonumber
\end{align}
Adding and subtracting $v$ in the duality paring leads to
\begin{align}
 \la  A(\xt,\until),\until-v \ra&\leq \la  A(\xt,\until),v_n-v \ra \nonumber\\
 &+\la A_n(\xt,\until)-A(\xt,\until),v_n-\until\ra\nonumber\\
&+  j_n(\xt,\until,v_n) -j_n(\xt,\until,\until)\nonumber \\
& + (\fnt,\pi\until-\pi v_n)_Z. \label{A1}
\end{align}
So, 
\begin{equation} 
 \la  A(\xt,\until),\until-v \ra\leq \sum\limits_{i=1}^4 S_n^i(v_n),\label{sumandos}
\end{equation}
 with
\begin{equation} \label{defSni}
 \begin{array}{lll}
& S_n^1(v_n)=\la  A(\xt,\until),v_n-v \ra, &  \\ [1mm] 
&S_n^2(v_n)=\la A_n(\xt,\until)-A(\xt,\until),v_n-\until\ra, &\\ [1mm] 
&S_n^3(v_n)=   j_n(\xt,\until,v_n) -j_n(\xt,\until,\until),  &\\[1mm] 
& S_n^4(v_n)= (\fnt,\pi\until-\pi v_n)_Z.  &
 \end{array}
\end{equation}
In order to pass to the upper limit in inequality (\ref{sumandos}) we now estimate each of the terms $S_n^i$ above.

First, using (\ref{A})(b) we deduce that that
\begin{align}
S_n^1(v_n)&\leq \|A(\xt,\until)\|_{V^*} \| v_n-v\|_V\nonumber\\
&\leq \big( \|A(\xt,\until)-A(\xt,\util)\|_{V^*}+\|A(\xt,\util)\|_{V^*}\big) \| v_n-v\|_V\nonumber\\
& \leq \big(L'' \| \until-\util\|_V+\|A(\xt,\util)\|_{V^*} \big)\| v_n-v\|_V. \nonumber
\end{align}
Therefore, since $L'' \| \until-\util\|_V+\|A(\xt,\util)\|_{V^*}$ is bounded and $\| v_n-v\|_V\to 0$ it follows that
\begin{equation}
\limsup\limits_{n\to\infty} S_n^1(v_n)= \limsup\limits_{n\to\infty} \; \la  A(\xt,\until),v_n-v \ra \leq 0.\label{Sn1}
\end{equation}

Next, exploiting condition (\ref{A-Conv})(a)  we find that 
\begin{align}
S_n^2(v_n)&\leq  \| A_n(\xt,\until)-A(\xt,\until)\|_{V^*} \|v_n-\until\|_V \nonumber\\
&\leq  \Lambda_n \left( \|\xt\|_X+ \|\until\|_V+\lambda_n \right) \|v_n-\until\|_V.\nonumber
\end{align}
Taking now into account the boundedness of the sequences $\|v_n\|_V$, $\|\until\|_V$  and $\{\lambda_n\}$, using assumption (\ref{A-Conv})(b)  we obtain that
\begin{equation}
\limsup\limits_{n\to\infty} S_n^2(v_n)= \limsup\limits_{n\to\infty} \; \la A_n(\xt,\until)-A(\xt,\until),v_n-\until\ra \leq 0.\label{Sn2}
\end{equation}

We proceed with the term $S_n^3(v_n)$. From hypothesis (\ref{jn-Conv})(c), since $v_n\to v$ and $\until\wl\util$ in $V$ we have
\begin{align}
\limsup\limits_{n\to\infty} S_n^3(v_n)&= \limsup\limits_{n\to\infty}{\color{black}\big[j_n(\xt,\until,v_n) -j_n(\xt,\until,\until)\big]}\nonumber\\ 
&\leq j(\xt,\util,v)-j(\xt,\util,\util).\label{Sn3}
\end{align}

Finally, 
\begin{align}
S_n^4(v_n)&= (\fnt,\pi\until-\pi\util)_Z+(\fnt,\pi\util-\pi v)_Z+(\fnt,\pi v-\pi v_n)_Z\nonumber\\
&\leq \|\fnt\|_Z \|\pi \until-\pi\util\|_Z+(\fnt,\pi\util-\pi v)+\|\fnt\|_Z \|\pi v-\pi v_n\|_Z.\nonumber
\end{align}
Thus, by  assumptions (\ref{fn-Conv})(a) and (\ref{pi-Conv}), the weak convergences of $\until$ to $\util$ and the strong convergence of $v_n$ to $v$, both in $V$,  we deduce that
\begin{align}
&\limsup\limits_{n\to\infty} S_n^4(v_n)=\limsup\limits_{n\to\infty} \;(\fnt,\pi\until-\pi v_n)_Z\leq (\ft,\pi\util-\pi v)_Z.\label{Sn4}
\end{align}

We now  pass to the upper limit in inequality (\ref{sumandos}) and use (\ref{Sn1})--(\ref{Sn4}) to find that
\begin{align}
\limsup\limits_{n\to\infty}\;  \la  A(\xt,\until),\until-v \ra  & \leq j(\xt,\util,v)-j(\xt,\util,\util)\nonumber\\
&+(\ft,\pi\util-\pi v)_Z\qquad \forall\,v\in K.\label{LimSup}
\end{align}

{\color{black} On the other hand, using the monotonicity of the operator $A(x(t),\cdot)$ we have
\[\la A(\xt,v),\until-v \ra \le A(\xt,\until),\until-v \ra\qquad\forall\, v\in V\]
and, using the convergence  $\until\wl \util$ in $V$, we find that
\begin{equation}\label{zq}
\la A(\xt,v),\util-v \ra \le \limsup\limits_{n\to\infty}\;  \la  A(\xt,\until),\until-v \ra\qquad\forall\, v\in V.
\end{equation}
We now combine the inequalities (\ref{LimSup}) and (\ref{zq}) to deduce that
\begin{align}
\la A(\xt,v),\util-v \ra & \leq j(\xt,\util,v)-j(\xt,\util,\util)\nonumber\\
&+(\ft,\pi\util-\pi v)_Z\qquad \forall\,v\in K.\label{zs}
\end{align}
Consider now an arbitrary element $w\in K$  and let $\theta\in(0,1]$. We  take $v=\util+\theta (w-\util)$
in (\ref{zs}), use the convexity of the function $j$ with respect the third argument and divide the resulting inequality with $\theta>0$ to find that
\begin{align*}
\la A(\xt,\util+\theta (w-\util)),\util-w \ra & \leq j(\xt,\util,w)-j(\xt,\util,\util)\nonumber\\
&+(\ft,\pi\util-\pi w)_Z.
\end{align*}
We now pass to the limit as $\theta\to 0$ and use assumption (\ref{A})(b)
to conclude that $\util\in K$ satisfies the inequality
\begin{align}
\la A(\xt,\util),\util-w \ra& \leq j(\xt,\util,w)-j(\xt,\util,\util)\nonumber\\
&+(\ft,\pi\util-\pi w)_Z,\qquad \forall\,w\in K,\ t\in I.\label{Inecuntil}
\end{align}
On the other hand, Lemma \ref{ExistUniq-VarIneq} guarantees that (\ref{Inecuntil}) has a unique solution. } Therefore, (\ref{VarIneq-P}) and (\ref{Inecuntil}), yield 
$\util=\ut$.
This assertion  reveals that each subsequence of $\lbrace \until\rbrace$ which converges weakly in $V$ has the same limit $\ut.$ Therefore, by a standard argument we  get that the whole sequence $\lbrace \until\rbrace$ converges weakly to $\ut$ in $V$, which concludes the proof.
\end{proof}

\medskip

We now proceed with the following result.

\begin{lema} For each $t\in I$ the following strong convergence holds:
\begin{equation}\label{es}
\until\to\ut\; {\rm \ \ in \ \ }V {\rm \ \ as \ }\ n\to\infty.
\end{equation}

\end{lema}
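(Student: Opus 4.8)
The plan is to upgrade the weak convergence $\until \wl \ut$ from Lemma \ref{Lem:until->u} to strong convergence using the strong monotonicity of $A(x(t),\cdot)$, exactly as in the standard Minty-type argument for variational inequalities. First I would fix $t \in I$ and a compact set $J \subset I$ with $t \in J$. Using the Mosco convergence (\ref{Kn-Conv})(a) applied to $u(t) \in K$, I obtain a sequence $v_n \in K_n$ with $v_n \to u(t)$ in $V$. Taking this $v_n$ as test function in (\ref{VarIneq-PnTilde}) gives
\[
\la A_n(\xt,\until), \until - v_n \ra \leq j_n(\xt,\until,v_n) - j_n(\xt,\until,\until) + (\fnt, \pi\until - \pi v_n)_Z .
\]
On the other hand, I would write the strong monotonicity estimate (\ref{A})(c) for the operator $A_n$ — which by (\ref{Const-Conv}) satisfies $m_n \geq m$ — as
\[
m\,\|\until - v_n\|_V^2 \leq \la A_n(\xt,\until) - A_n(\xt,v_n), \until - v_n \ra ,
\]
and combine the two displays to bound $m\,\|\until - v_n\|_V^2$ by
\[
\la A_n(\xt,v_n), v_n - \until \ra + j_n(\xt,\until,v_n) - j_n(\xt,\until,\until) + (\fnt, \pi\until - \pi v_n)_Z .
\]

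The remaining task is to show that the right-hand side tends to zero (or at most to something nonpositive) as $n \to \infty$. For the first term I would split $A_n(\xt,v_n)$ as $[A_n(\xt,v_n) - A(\xt,v_n)] + [A(\xt,v_n) - A(\xt,u(t))] + A(\xt,u(t))$ and use: assumption (\ref{A-Conv})(a) together with $\Lambda_n \to 0$ and boundedness of $\{\lambda_n\}$, $\{v_n\}$, $x(t)$; the Lipschitz bound (\ref{A})(b) with $\|v_n - u(t)\|_V \to 0$; and the fact that $v_n - \until \wl u(t) - u(t) = 0$ weakly in $V$ (since $\until \wl \ut$ and $v_n \to u(t)$), so that $\la A(\xt,u(t)), v_n - \until \ra \to 0$. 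For the $j_n$ term, assumption (\ref{jn-Conv})(c) with $u_n = \until \wl \ut$ and the sequence $v_n \to u(t)$ (so $v_n \wl u(t)$) gives $\limsup_n [j_n(\xt,\until,v_n) - j_n(\xt,\until,\until)] \leq j(\xt,\ut,u(t)) - j(\xt,\ut,u(t)) = 0$. For the $Z$-term, I would write $(\fnt, \pi\until - \pi v_n)_Z = (\fnt, \pi\until - \pi u(t))_Z + (\fnt, \pi u(t) - \pi v_n)_Z$; by (\ref{pi-Conv}) $\pi\until \to \pi\ut = \pi u(t)$ strongly in $Z$, by (\ref{pi}) and $v_n \to u(t)$ also $\pi v_n \to \pi u(t)$, and by (\ref{fn-Conv})(b) $\|\fnt\|_Z$ is bounded on $J$, so both pieces tend to zero.

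Putting these estimates together yields $\limsup_n m\,\|\until - v_n\|_V^2 \leq 0$, hence $\|\until - v_n\|_V \to 0$; since $v_n \to u(t)$ in $V$ as well, the triangle inequality gives $\until \to u(t)$ strongly in $V$, which is (\ref{es}). I do not expect any serious obstacle here — the argument is a routine Minty/strong-monotonicity passage — but the one point requiring a little care is ensuring that $v_n - \until$ is genuinely \emph{weakly} convergent to zero (not merely bounded) so that the bounded term $A(\xt,u(t))$ paired against it vanishes; this is where the weak convergence established in Lemma \ref{Lem:until->u} is essential rather than just the boundedness from Lemma \ref{until-bounded}. A secondary subtlety is that all constants ($L''$, $\Lambda_0$, $\lambda_0$, $w_J$, $M_J$, and the bound $\widetilde{C}_J$ on $\|\until\|_V$) must be taken uniform over the compact set $J$, which is exactly what Lemmas \ref{until-bounded} and the assumptions (\ref{A-Conv}), (\ref{fn-Conv}) already provide.
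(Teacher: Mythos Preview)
Your argument is correct and follows the same Minty-type strategy as the paper, but the organization differs in a noteworthy way. The paper works with the \emph{unperturbed} operator $A$ and estimates $\|\until-\ut\|_V$ directly: it derives the inequality
\[
m\,\|\until-\ut\|_V^2 \le \la A(\xt,\ut),\ut-\until\ra + \sum_{i=1}^4 S_n^i(v_n),
\]
where the $S_n^i$ are exactly the four terms already introduced and estimated in the proof of Lemma~\ref{Lem:until->u}; it then simply quotes those earlier bounds (\ref{Sn1})--(\ref{Sn4}) with $v=\ut$ and uses the weak convergence $\until\wl\ut$ to kill the remaining linear term. Your version instead uses the monotonicity of $A_n$ and estimates $\|\until-v_n\|_V$ against the recovery sequence, finishing with the triangle inequality $\|\until-\ut\|_V\le\|\until-v_n\|_V+\|v_n-\ut\|_V$. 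The paper's route is more economical because it recycles the $S_n^i$ machinery verbatim; your route is more self-contained because it does not require re-deriving the analogue of (\ref{sumandos}) for $A$ in place of $A_n$, and it never needs the Lipschitz bound (\ref{A})(b) on $A$ for the estimate of $S_n^1$ (you replace that step by $\|v_n-\ut\|_V\to 0$). Both rely at the same essential point on the weak convergence from Lemma~\ref{Lem:until->u} to annihilate a fixed dual element against $v_n-\until$ or $\ut-\until$, which you correctly flagged as the key subtlety.
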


\begin{proof}

Let $t\in I$ and let $J\subset I$ be a compact set such that $t\in J$. 
As $u(t)\in K$, assumption (\ref{Kn-Conv}) and arguments similar to those used in the proof of inequality (\ref{sumandos}) lead to
\begin{equation}\label{rr} 
\la  A(\xt,\until),\until-\ut \ra\leq \sum\limits_{i=1}^4 S_n^i(v_n).
\end{equation}
Here, for each $n\in\mathbb{N}$ and $i\in\{1,2,3,4\}$, $ S_n^i$ is given by (\ref{defSni}) and $\{v_n\}\subset V$ is a sequence such that
\begin{equation}
v_n\in K_n\;\; \ \forall\, n\in\mathbb{N}\;{\rm \ \ and\ }\ \; v_n\to \ut\;{\rm \ in \ \ }V.\label{seqtou}
\end{equation}
Inequality (\ref{rr}) implies that
\begin{align} 
& \la  A(\xt,\until)-A(\xt,\ut),\until-\ut \ra\nonumber\\
&\qquad\leq \la A(\xt,\ut),\ut-\until \ra +\sum\limits_{i=1}^4 S_n^i(v_n)\nonumber
\end{align}
and, using the strong monotonicity of $A$, (\ref{A})(c), yields
\begin{equation} 
m\| \until-\ut\|^2_{V}\leq \la A(\xt,\ut),\ut-\until \ra
 +\sum\limits_{i=1}^4 S_n^i(v_n).\label{CotaFuerte}
\end{equation}

On the other hand, the convergence (\ref{er}) in
Lemma \ref{Lem:until->u} implies that
\begin{equation}
\la A(\xt,\ut),\ut-\until \ra\to 0,\;{\rm\ as \ }\; n\to\infty.\label{Cota Axtut}
\end{equation} 
Moreover, using (\ref{Sn1})--(\ref{Sn4}), taking into account that $\until\wl \util=\ut$, replacing $v=\ut$ and considering the sequence $\lbrace v_n \rbrace$ such that (\ref{seqtou}) holds, we see that
\begin{equation}
\lim\sup\limits_{n\to\infty} \sum\limits_{i=1}^4 S_n^i(v_n)\leq 0.\label{CotaLimSupSumandos}
\end{equation}
Therefore, passing to the upper limit in (\ref{CotaFuerte}) and using (\ref{Cota Axtut}), (\ref{CotaLimSupSumandos}) we deduce  that
$$\lim\sup_{n\to\infty} \| \until-\ut\|^2_V\leq 0,$$ which implies (\ref{es}).
\end{proof}

We are now in a position to provide the proof of Theorem \ref{t1}.

\begin{proof}

Let $t\in I$ and $n\in\mathbb{N}$. Moreover, consider a compact interval $J\subset I$ such that $[0,t]\subset J$  and denote by $L_J$ the constant which arises in condition (\ref{F})(b).
We test with $v_n=\until\in K_n$ in {\color{black}(\ref{VarIneq-Pn})}  to see that
\begin{align}
 \la  A_n(\xnt,\unt),\unt-\until \ra&\leq j_n(\xnt,\unt,\until) -j_n(\xnt,\unt,\unt)\nonumber \\
& + (\fnt,\pi\unt-\pi \until)_Z. \label{PrimerTermino}
\end{align}
Then, taking $v_n=\unt\in K_n$ in  {\color{black}(\ref{VarIneq-PnTilde})} we find that
\begin{align}
 \la  A_n(\xt,\until),\until-\unt \ra&\leq j_n(\xt,\until,\unt) -j_n(\xt,\until,\until)\nonumber \\
& + (\fnt,\pi\until-\pi \unt)_Z. \label{SegundoTermino}
\end{align}
We now add inequalities (\ref{PrimerTermino}) and (\ref{SegundoTermino}) to deduce that
\begin{align}
&  \la  A_n(\xnt,\unt) -A_n(\xt,\until) ,\unt-\until\ra\leq j_n(\xnt,\unt,\until) \nonumber \\
& \qquad-j_n(\xnt,\unt,\unt)+ j_n(\xt,\until,\unt) -j_n(\xt,\until,\until).\nonumber
\end{align}
Next, writing
\begin{align*}
 A_n(\xnt,\unt)-A_n(\xt,\until)&=A_n(\xnt,\unt)-A_n(\xnt,\until) \nonumber\\
&+ A_n(\xnt,\until)-A_n(\xt,\until),
\end{align*}
we get
\begin{align}
& \la  A_n(\xnt,\unt)-A_n(\xnt,\until),\unt-\until \ra\nonumber\\
&\qquad \leq \la  A_n(\xt,\until)-A_n(\xnt,\until),\unt-\until \ra \nonumber\\
&\qquad + j_n(\xnt,\unt,\until) -j_n(\xnt,\unt,\unt) \nonumber\\
 &\qquad +j_n(\xt,\until,\unt) -j_n(\xt,\until,\until).\nonumber 
\end{align}
Therefore, using assumptions (\ref{A})$_n$(c) and (\ref{j})$_n$(a) we obtain that
\begin{align}\label{za}
&m_n\|\unt-\until\|^2_V\nonumber \leq \| A_n(\xt,\until)-A_n(\xnt,\until)\|_{V^*}\|\unt-\until \|_V\nonumber\\
&\qquad +\alpha_n \|\xnt-\xt\|_X \|\until-\unt \|_V+\beta_n\|\unt-\until\|^2_V.
\end{align}
Next, {\color{black} assumptions (\ref{A})$_n$(a), (\ref{Const-Conv}) and inequality (\ref{za}) imply that}
\begin{equation}
\|\unt-\until \|_V\leq \tfrac{L'+\alpha}{m-\beta} \| \xt-\xnt\|_X.\label{Dif:unt-until}
\end{equation}
Therefore, {\color{black} from (\ref{Dif:unt-until})} we deduce that
\begin{align}
\|\unt-\ut \|_V &\leq \|\unt-\until \|_V+\|\until-\ut \|_V  \nonumber\\
&\leq \tfrac{L'+\alpha}{m-\beta} \| \xt-\xnt\|_X+ \|\until-\ut \|_V.\label{Dif:unt-ut}
\end{align}

On the other hand, since $\xt$ and $\xnt$ satisfy (\ref{DiffEq-P})--(\ref{InitCond-P}) and (\ref{DiffEq-Pn})--(\ref{InitCond-Pn}), res\-pectively, we find  that
\begin{eqnarray*}
&&x(t)=x_0+\int_0^t F(s,\xs,\us)\; ds,\\ [2mm]
&&\xnt=x_{0n}+\int_0^t F_n(s,\xns,\uns)\;ds
\end{eqnarray*}
and, therefore,
\begin{eqnarray}\label{Cota:xt-xnt}
&&\|\xt-\xnt\|_X\leq \| x_0-x_{0n}\|_X\nonumber \\
&&\qquad\qquad+\int_0^t \| F(s,\xs,\us)-F_n(s,\xns,\uns)\|_X \;ds.
\end{eqnarray}
Now, using (\ref{F})$_n$(b)  and (\ref{F-Conv})  we obtain that
\begin{align}
& \| F(s,\xs,\us)-F_n(s,\xns,\uns)\|_X\leq \| F(s,\xs,\us)-F_n(s,\xs,\us)\|_X\nonumber\\
&\qquad\qquad+\| F_n(s,\xs,\us)-F_n(s,\xns,\uns)\|_X\nonumber\\
&\qquad \qquad\leq  \Gamma_n\left( \|\xs\|_X+\| \us\|_V+\gamma_n  \right)\nonumber\\
&\qquad \qquad + L_J \left( \|\xs-\xns \|_X+\|\us-\uns \|_V \right).\label{Cota:xt-xnt-1}
\end{align}

 We combine  (\ref{Dif:unt-ut}) and (\ref{Cota:xt-xnt-1})  to find that
\begin{align}
& \| F(s,\xs,\us)-F_n(s,\xns,\uns)\|_X\leq  \Gamma_n\left( \|\xs\|_X+\| \us\|_V+\gamma_n  \right)\nonumber\\
&\qquad + L_J\left(1+\tfrac{L'+\alpha}{m-\beta} \right) \|\xns-\xs\|_X+ L_J \| \untils-\us\|_V.\label{Cota:Fn-F}
\end{align}
Then, exploiting (\ref{Cota:xt-xnt}) and taking into account  (\ref{Cota:Fn-F})  we deduce that
\begin{align}
\|\xt-\xnt \|_X\leq g_n(t)+c\int_0^t \| \xs-\xns\|_X\; ds,
\end{align}
with $c=L_J\left(1+\tfrac{L'+\alpha}{m-\beta} \right)$ and
\begin{align}
g_n(t)&=\| x_0-x_{0n}\|_X+\int_0^t  \Gamma_n\left( \|\xs\|_X+\| \us\|_V+\gamma_n  \right)  \; ds\nonumber\\
&+\int_0^t L_J \| \untils-\us\|_V \; ds. 
\end{align}
We now use the Gronwall argument to see that  
\begin{align}\label{azz}
\|\xt-\xnt \|_X\leq g_n(t) \; e^{ct}.
\end{align}
Moreover, note that assumptions (\ref{x0n-Conv}), (\ref{F-Conv}), {\color{black} the bound (\ref{xx})} and the convergence (\ref{es}) allow us
to use the Lebesgue
dominated convergence theorem  to obtain  that
\begin{equation}\label{az}
g_n(t)\to 0\qquad {\rm \ as\ }\qquad n\to\infty.
\end{equation}
We now use (\ref{azz})  and (\ref{az}) to see that
$\xnt\to \xt$ in $X$. Then,  (\ref{Dif:unt-ut}) implies $\unt\to \ut$ in $V$, which concludes the proof.
\end{proof}

We end this section with the following remarks.

\begin{rem}\label{rem1} Assume that
\begin{equation}\label{ffq}
\theta\in C(I;\mathbb{R}),\quad \widetilde{f}_n\in Z\quad{\rm and}\quad f_n(t)=\theta(t) \widetilde{f}_n\qquad \forall\, n\in\mathbb{N},\  t\in I.
\end{equation}	
In addition, assume that 
\begin{equation} 
\widetilde{f}\in Z,\quad f(t)=\theta(t) \widetilde{f}\qquad \forall\, t\in I
\quad{\rm and}\quad
	\widetilde{f}_{n}\wl \widetilde{f}\quad{\rm \ in\ }Z.\label{wide}
	\end{equation}
	Then it is easy to check that $(\ref{f})$, $(\ref{f})_n$ and 
	$(\ref{fn-Conv})$ hold and, therefore, 
	the statement of Theorem $\ref{t1}$ still remains valid if we replace these assumptions
	by hypotheses $(\ref{ffq})$ and $(\ref{wide})$.
\end{rem}

{\color{black} \begin{rem}\label{re2}  Note that
Theorem $\ref{t1}$ provides a pointwise convergence result for the solution  $(x_n,u_n)$ of Problem $\mathcal{P}_n$  to the solution $(x,u)$ of Problem $\mathcal{P}$ as $n\to\infty$, see $(\ref{ez})$.  Extending this  result to a convergence  result in the space $C^1(I;X)\times C(I;V)$ remains  on open problem which deserves to be investigated in the future.\end{rem}}

\section{An optimal control problem}\label{s4}
\setcounter{equation}0

Throughout  this section we assume that $(W,\|\cdot\|_W)$ is a reflexive Banach space and $U$ is a nonempty subset of $W$.
For each $q\in U$ we consider a function $F_q$, an initial data $x_{0q}$, a convex set $K_q$, an operator $A_q$ and  two functions $j_q$ and $f_q$  that satisfy the assumptions (\ref{F})--(\ref{f}), respectively with constants $L_{Jq}$, $L_q'$, $L_q''$, $m_q$, $\alpha_q$ and $\beta_q$. To avoid any confusion, when used with $q$ we will  refer to these assumptions as (\ref{F})$_q$--(\ref{f})$_q$.
We now  consider the following problem.

\medskip

\noindent \textbf{Problem $\mathcal{P}_q$}. 
{\it Find $x_q\in C^1(I;X)$ and $u_q\in C(I;V)$ such that}
\begin{align}
&\dot{x}_q(t)=F_q(t,x_q(t),u_q(t))\qquad \forall\,t\in I, \label{DiffEq-Pq}\\
&x_q(0)=x_{0q},\label{InitCond-Pq}\\
& \uqt\in K_q, \qquad  \langle  A_q(\xqt,\uqt),v_q-\uqt \rangle\nonumber \\
& \qquad \quad \qquad\qquad +j_q(\xqt,\uqt,v_q)-j_q(\xqt,\uqt,\uqt)
\nonumber \\
& \qquad \qquad \qquad\quad\geq (\fqt,\pi v_q-\pi\uqt)_Z\quad\forall\, v_q\in K_q,\ t\in I. \label{VarIneq-Pq} 
\end{align}
Under assumptions (\ref{F})--(\ref{f}), (\ref{pi}), {\color{black} Theorem \ref{ExistUniq-P}} guarantees that for each $q\in U$ there exists a unique solution $(x_q,u_q)\in C^1(I;X)\times C(I;V)$ to Problem $\mathcal{P}_q$}.

Consider now a cost function $\mathcal{L}:X\times V\times U\to\mathbb{R}$. Then, the optimal control problem we study in this section is the following.
\medskip

\noindent \textbf{Problem $\mathcal{Q}$}. 
{\it  Given $t\in I$, find $q^*\in U$ such that} 
\begin{align}\label{Problem:Q}
\mathcal{L}(x_{q^*}(t),u_{q^*}(t),q^*)=\min\limits_{q\in U}\ \mathcal{L}(\xqt,\uqt,q).
\end{align}

In the study of this problem we consider the following assumptions.
\begin{eqnarray}
U {\rm\ is\ a\ nonempty\ weakly\ closed\ subset\ of\ }W. \label{U}
\end{eqnarray}
\begin{eqnarray}
&&\left\{ \begin{array}{ll} {\rm For\ all\ sequences\ }  \lbrace x_n \rbrace\subset X,\; \lbrace u_n\rbrace\subset V,\;\lbrace q_n\rbrace\subset U{\rm\  such\ that\ }\\[2mm]
x_n\to x{\rm\ in\ }X,\; u_n\to u{\rm\ in\ }V,\; q_n\wl q{\rm\ in\ }W,\; {\rm\ we\ have\ }\\[2mm]
\liminf\limits_{n\to\infty} \mathcal{L}(x_n,u_n,q_n)\geq \mathcal{L}(x,u,q).
\end{array}\right. \label{LimInfL}
\end{eqnarray}
\begin{eqnarray}
&&\left\{ \begin{array}{ll} {\rm There\ exists\ } z:U\to\mathbb{R}{\rm\ such\ that}\\[2mm]
{\rm (a)\ } \mathcal{L}(x,u,q)\geq z(q)\quad \forall \,x\in X,\; u\in V,\; q\in U.\hspace{25mm} \\[2mm]
{\rm (b)\ } \|q_n\|_W\to\infty {\rm\ implies\ that\ } z(q_n)\to\infty.\qquad
\end{array}\right. \label{CoerL}
\end{eqnarray}

\begin{equation}
U {\rm\ is \ a \ bounded\ subset\ of\ }W.\label{Ubound}
\end{equation}

\medskip
Our main  result of this section is the following. 

\begin{teo}\label{t2}
Assume  $(\ref{F})_q$--$(\ref{f})_q$,  for each $q\in U$. In addition, assume $(\ref{pi})$, $(\ref{pi-Conv})$, $(\ref{U})$, $(\ref{LimInfL})$ and either $(\ref{CoerL})$ or $(\ref{Ubound})$. For each sequence $\lbrace q_n\rbrace\subset U$ such that $q_n\wl q$ in $W$ define 
$$F=F_{q},\quad x_{0}=x_{0q},\quad K=K_{q},\quad A=A_{q},\quad j=j_{q},\quad f=f_{q}$$
and
$$F_n=F_{q_n},\quad x_{0n}=x_{0q_n},\quad K_n=K_{q_n},\quad A_n=A_{q_n},\quad j_n=j_{q_n},\quad f_{n}=f_{q_n}$$
and assume that  $(\ref{Const-Conv})$, $(\ref{F-Conv})$--$(\ref{fn-Conv})$ hold.
Then, for each $t\in I$, the optimal control problem $\mathcal{Q}$ has at least one solution $q^*$.
\end{teo}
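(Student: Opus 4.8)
The plan is to apply the Weierstrass-type theorem (Theorem~\ref{Weierstrass}) to the function
$q\mapsto \mathcal{L}(x_q(t),u_q(t),q)$ on $U$. Fix $t\in I$. Since $U$ is weakly closed in the reflexive Banach space $W$ by \eqref{U}, and we have either coercivity \eqref{CoerL} or boundedness \eqref{Ubound} at our disposal, the only substantial thing to verify is that this composite map is weakly lower semicontinuous on $U$. Everything else will follow by invoking Theorem~\ref{Weierstrass} directly.

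First I would set up the weak lower semicontinuity argument. Take a sequence $\{q_n\}\subset U$ with $q_n\wl q$ in $W$; by \eqref{U} we have $q\in U$. Define $F$, $x_0$, $K$, $A$, $j$, $f$ from the limit $q$ and $F_n$, $x_{0n}$, $K_n$, $A_n$, $j_n$, $f_n$ from $q_n$, exactly as in the statement. The hypotheses \eqref{Const-Conv} and \eqref{F-Conv}--\eqref{fn-Conv} are assumed to hold for this choice, and \eqref{pi}, \eqref{pi-Conv} are standing assumptions. Hence Theorem~\ref{t1} applies and gives, for our fixed $t$,
\[
u_{q_n}(t)\to u_q(t)\ \text{in }V\qquad\text{and}\qquad x_{q_n}(t)\to x_q(t)\ \text{in }X\qquad\text{as }n\to\infty.
\]
Now \eqref{LimInfL}, applied with the sequences $x_n:=x_{q_n}(t)$, $u_n:=u_{q_n}(t)$, $q_n$ (which converge strongly, strongly, and weakly respectively), yields
\[
\liminf_{n\to\infty}\mathcal{L}(x_{q_n}(t),u_{q_n}(t),q_n)\ \geq\ \mathcal{L}(x_q(t),u_q(t),q).
\]
This is precisely the statement that $q\mapsto\mathcal{L}(x_q(t),u_q(t),q)$ is weakly lower semicontinuous on $U$.

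To finish, I would consider the two cases separately. If \eqref{Ubound} holds, then $U$ is a bounded weakly closed subset of the reflexive space $W$, so part~(i) of Theorem~\ref{Weierstrass} applies to the weakly l.s.c.\ function above, giving existence of a minimizer $q^*\in U$. If instead \eqref{CoerL} holds, then for any $q\in U$ we have $\mathcal{L}(x_q(t),u_q(t),q)\geq z(q)$ by \eqref{CoerL}(a), and $\|q_n\|_W\to\infty$ forces $z(q_n)\to\infty$ by \eqref{CoerL}(b); hence $\mathcal{L}(x_q(t),u_q(t),q)\to\infty$ as $\|q\|_W\to\infty$, i.e.\ the cost map is coercive, and part~(ii) of Theorem~\ref{Weierstrass} applies. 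In either case $\mathcal{Q}$ has at least one solution $q^*$, which is what we wanted.

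The only delicate point is making sure the continuous-dependence machinery of Theorem~\ref{t1} is genuinely available along the extracted sequence: this is not an obstacle here because the theorem statement explicitly assumes \eqref{Const-Conv} and \eqref{F-Conv}--\eqref{fn-Conv} hold for the data generated by any weakly convergent sequence in $U$, so the hypotheses of Theorem~\ref{t1} are met verbatim. Thus the proof is essentially an assembly of Theorem~\ref{t1}, hypothesis \eqref{LimInfL}, and Theorem~\ref{Weierstrass}, with no hidden technical work beyond the case split on coercivity versus boundedness.
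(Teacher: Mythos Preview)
Your proof is correct and follows essentially the same approach as the paper: fix $t$, use Theorem~\ref{t1} along any weakly convergent sequence $\{q_n\}$ to obtain strong convergence of $(x_{q_n}(t),u_{q_n}(t))$, feed this into \eqref{LimInfL} to get weak lower semicontinuity of the cost map, and conclude via Theorem~\ref{Weierstrass} under either \eqref{Ubound} or \eqref{CoerL}. The paper merely introduces the shorthand $J_t(q)=\mathcal{L}(x_q(t),u_q(t),q)$ for this map, but the logic is identical.
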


\begin{proof}
Let $t\in I$ be fixed and consider the function $J_t:U\to\mathbb{R}$ defined by
\begin{equation}\label{Jt}
J_t(q)=\mathcal{L}(\xqt,\uqt,q)\qquad \forall\, q\in U.
\end{equation}
Then, we consider  the problem of finding $q^*$ such that
\begin{equation}\label{MinJt}
J_t(q^*)=\min\limits_{q\in U} J_t(q).
\end{equation}
We apply Theorem \ref{t1}  to see that $x_{q_n}(t) \to x_q(t)$ in $X$ and $u_{q_n}(t)\to u_q(t)$ in $V$. 
Then, taking into account the convergence $q_n\wl q$ in $U$,   the definition (\ref{Jt}) of $J_t$  and condition (\ref{LimInfL}) on $\mathcal{L}$ we find that
\begin{align}
\liminf_{n\to\infty} J_t(q_n)=\liminf_{n\to\infty} \mathcal{L}(x_{q_n}(t),u_{q_n}(t),q_n)\geq \mathcal{L}(x_q(t),u_q(t),q)=J_t(q).
\end{align}
This means that $J_t$ is a weakly lower semicontinuous function. 

Assume now that  condition (\ref{CoerL}) is satisfied. Then 
$$J_t(q_n)=\mathcal{L}(x_{q_n},u_{q_n},q_n)\geq z(q_n)$$
and  $\|q_n\|_W\to \infty$ implies $z(q_n)\to\infty$. It follows from here that $J_t(q_n)\to\infty$, i.e., $J_t$ is coercive.
Recalling that $W$ is a reflexive Banach space and $U$ is a weakly closed subset of $W$, the existence of at least one solution to problem (\ref{MinJt}) is a direct consequence of Theorem \ref{Weierstrass}. This means that there exists a minimizer $q^*\in U$ for  $J_t$ which, in turn,  guarantees that Problem $\mathcal{Q}$ has at least one solution. The same conclusions follows if we assume that  condition (\ref{Ubound}) is satisfied since, in this case,  the Weierstrass-type argument provided by Theorem \ref{Weierstrass} still holds.
\end{proof}

We end this section with the following remark.

\begin{rem}\label{rem2} Assume that
	\begin{equation}\label{fq}
	\theta\in C(I;\mathbb{R}),\quad \widetilde{f}_q\in Z\quad{\rm and}\quad f_q(t)=\theta(t) \widetilde{f}_q\qquad \forall\, Q\in U,\  t\in I.
	\end{equation}	
	In addition, assume that 
	\begin{equation}\label{wid}
	\widetilde{f}_{q_n}\wl \widetilde{f}_q\quad{\rm \ in\ }Z\ \ \mbox{\rm for any sequence $\lbrace q_n\rbrace\subset U$ such that $q_n\wl q$\ in\ $W$.}
	\end{equation}
	Then it is easy to check that
	$(\ref{f})_q$
	and $(\ref{fn-Conv})$ hold and, therefore, 
	the statement of Theorem $\ref{t2}$ still remains valid if we replace
	these assumptions  by hypotheses $(\ref{fq})$, $(\ref{wid})$.
\end{rem}

\section{A frictionless contact problem}
\label{s5}\setcounter{equation}0

As {\color{black} mentionned} in the Introduction, the results in Section \ref{s3}--\ref{s4}  can be used in the analysis and control of mathematical models which describe the contact of a deformable body with a foundation.  A large number of examples can be consideblack, in which the contact is frictional or frictionless and the material behaviour  is described by an elastic, viscoelastic or {\color{black} viscoplastic} constitutive law. In this section  we provide such an example in which we assume that the contact is frictionless, the material is viscoelastic and the hardening of the foundation is taken into account.  For more details on  the modelling and analysis of contact problems we   refer the reader to the books \cite{SoMa2012}, \cite{SoMi2018}.

Everywhere below $d\in\{2,3\}$, $\mathbb{S}^d$ denotes  the space of second order symmetric tensors on $\mathbb{R}^d$  and $``\cdot"$, $\|\cdot\|$
will represent the inner product and the Euclidean norm on $\mathbb{R}^d$ and $\mathbb{S}^d$, respectively. We use the notation $\bcero$ for the zero element of the spaces  $\mathbb{R}^d$ and $\mathbb{S}^d$
and the indices $i,j, k, l$ run from 1 to $d$. 
Let $\Omega\subset\mathbb{R}^d$ be a bounded domain with Lipschitz continuous boundary $\Gamma$ {\color{black}and let $\overline\Omega=\Omega\cup\Gamma$}. We denote by $\bnu$  the outward unit normal at $\Gamma$ and   $\by\in\Omega\cup\Gamma$  will represent the spatial variable which, sometimes, for simplicity, is skipped.
Assume that $\Gamma=\Gamma_1\cup \Gamma_2\cup\Gamma_3$ where $\Gamma_1$, $\Gamma_2$, $\Gamma_3$ are mutually disjoint  measurable parts of $\Gamma$ such that ${ meas}\;(\Gamma_1)>0$.  For the displacement and the stress field we use the Hilbert spaces $(V,(\cdot,\cdot)_V)$ and $(Q,(\cdot,\cdot)_Q)$, respectively, defined by
\begin{align*}
&V=\lbrace\, \bv=(v_i)\in H^1(\Omega)^d\;:\; \bv\vert_{\Gamma_1}=\bcero\,\rbrace,\quad
(\bu,\bv)_V=\int_\Omega \bepsilon(\bu)\cdot \bepsilon(\bv)\,dy,
\\
&Q=\lbrace\, \btau=(\tau_{ij})\in L^2(\Omega)^{d\times d}\;:\; \tau_{ij}=\tau_{ji}\,\rbrace,
\quad (\bsigma,\btau)_Q=\int_\Omega
\bsigma\cdot \btau\,dy.\end{align*}
Here and below $\bepsilon$ represents the deformation operator, i.e.,  $\bepsilon(\bv)$ denotes the symmetric part of the {\color{black} gradient of  $\bv$}, for any $\bv\in V$. The associate norms on the spaces $V$ will be denoted by $\|\cdot\|_V$ and $\|\cdot\|_Q$, respectively.

For an element $\bv\in V$, we use the notation $v_\nu$ and $\bv_\tau$ for the normal and tangential traces of 
$\bv$ on $\Gamma$, i.e., $v_\nu=\bv \cdot \bnu$ and $\bv_{\tau}=\bv-v_\nu \bnu$. Moreover, for a regular stress field $\bsigma\in Q$ we use the notation $\sigma_\nu=(\bsigma \bnu)\cdot \bnu$ and $\bsigma_\tau={\color{black} \bsigma-\sigma_\nu \bnu}$. Finally,
as usual, we denote by $V^*$ the strong topological dual of $V$, by $\la \cdot,\cdot\ra$ the duality paring mapping and by $I$ an interval of time of the form $I=[0,T]$ with $T>0$ or $I=[0,+\infty)$.

Then, the classical formulation of the viscoelastic contact problem we consider in this section is the following.
\medskip

\noindent \textbf{Problem $\mathcal{P}^{ve}$}. 
{\it Find a stress field $\bsigma:\Omega\times I\to \mathbb{S}^d$, a displacement field $\bu:\Omega\times I\to\mathbb{R}^d$ and an interface function $\eta_\nu:\Gamma_3\times I\to
\mathbb{R}$ such that}
\begin{align}
\dot{\bsigma}(t)=\mathcal{E}\bepsilon(\dot{\bu}(t))+\beta(\bsigma(t)-\mathcal{F}(\bepsilon(\bu(t))) &\qquad {\rm in\ } \Omega,\label{S58}\\
{\rm Div\ }\bsigma(t)+\bff_0(t)=\bcero &\qquad {\rm in\ }\Omega,\label{S59}\\
\bu(t)=\bcero &\qquad {\rm on\ }\Gamma_1,\label{S60}\\
\bsigma(t)\cdot \bnu=\bff_2(t)&\qquad {\rm on\ }\Gamma_2,\label{S61}
\end{align}
\begin{align}
\left.\begin{array}{lll}
u_\nu(t)\leq g,\quad \sigma_\nu(t)+ku_\nu^+(t)+\eta_\nu(t)\leq 0, \\ [2mm]
(u_\nu(t)-g)(\sigma_\nu(t)+ku_\nu^+(t))+\eta_\nu(t))=0,\\ [2mm]
0\le\eta_\nu(t)\le h\Big(\displaystyle\int_0^tu_\nu^+(s)\,ds,{\color{black}u^+_\nu(t)}\Big),
\\[3mm]
{\color{black}\eta_\nu(t)=}\left\{\begin{array}{ll}
{\color{black}0\ \hspace{38mm} {\rm if}\ \ u_\nu(t)<0}, \\[4mm]
{\color{black}h\Big(\displaystyle\int_0^tu_\nu^+(s)\,ds,u^+_\nu(t)\Big)\ \ \,\,{\rm if}\ \ u_\nu(t)>0} \end{array}
\right.\end{array}
\right\}&\qquad {\rm on\ }\Gamma_3,\label{S62}\\
\bsigma_\tau(t)=\bcero& \qquad {\rm on\ }\Gamma_3,\label{S63}\\
\bsigma(0)=\bsigma_0,\qquad \bu(0)=\bu_0 &\qquad {\rm in\ }\Omega.\label{S64}
\end{align}

Note that Problem $\mathcal{P}^{ve}$ describes the equilibrium of a viscoelastic body which occupies the domain $\Omega$,  is held fixed on the part $\Gamma_1$ on his boundary, is acted upon by a  time-dependent surface traction of density $\bff_2$ on $\Gamma_2$ and is in contact with a foundation on $\Gamma_3$.
Equation (\ref{S58}) represents the constitutive law which models the viscoelastic behavior of the material. Here  
$\mathcal{E}$ is a fourth order elasticity tensor, $\beta$ is a viscosity coefficient  and  $\mathcal{F}$ is a  constitutive function.
Equation (\ref{S59}) represents the equilibrium equation in which $\bff_0$ denotes the density of body forces, (\ref{S60}) is the displacement boundary condition and (\ref{S61}) is the traction boundary condition.

Condition (\ref{S62}) is the contact condition  which models the contact with a foundation made of a rigid body coveblack by a layer of rigid-elastic material. Here $g$ represents the thickness of this layer, $h$ is a given function which describes its rigidity, $k$ is a stiffness coefficient and $r^+$ denotes the positive part of $r$, i.e., $r_+={\rm max}\,\{r,0\}$.
Details can be found in \cite{SoMi2018}. Here we restrict ourselves to recall that the quantity
\begin{equation}\label{csi}
\xi(\by,t)=\int_0^tu_\nu^+(\by,s)\,ds
\end{equation}
represents the  accumulated penetration in the point $\by$ of the contact surface at the time moment $t$. Assuming that the yield function $h$ depends on the 
{\color{black} process variables $\xi$ and $u^+_\nu$}
describes  the hardening  property of the foundation.

Condition (\ref{S63}) shows that the 
tangential component of the stress vanishes on the contact surface  and, therefore, the contact is frictionless. Finally, (\ref{S64}) are the initial conditions, in which $\bu_0$ and $\bsigma_0$ are given.

In the study of Problem $\mathcal{P}^{ve}$  
we use the space of symmetric fourth order tensors ${\bf Q_\infty}$ defined by
${\bf Q_\infty}=\{\, {\cal C}=(c_{ijkl})\mid 
{c}_{ijkl}={c}_{jikl}={c}_{klij} \in L^\infty(\Omega)\}$ and we consider the  following assumption on the data. 
\begin{eqnarray}\label{E}
&&\left\{ \begin{array}{ll} 
{\rm (a)\ } \mathcal{E}\in{\bf Q_\infty}.\\[2mm]
{\rm (b)\ } {\rm \ There\ exists\ }m_\mathcal{E}>0{\rm \ such\ that\ }\\
\qquad \mathcal{E}(\by)\btau\cdot\btau\geq m_\mathcal{E}\|\btau\|^2\quad{\rm for\ all\ }\btau\in\mathbb{S}^d, {\rm\ a.e.}\ \, \by\in\Omega.\hspace{5mm}
\end{array}\right.
\end{eqnarray}
\begin{eqnarray}
&&\hspace{0mm}\left\{ \begin{array}{ll} 
{\rm (a)\ } \mathcal{F}:\Omega\times \mathbb{S}^d\to \mathbb{S}^d.\\[2mm]
{\rm (b)\ } {\rm There\ exists\ } L_{\mathcal{F}}>0 {\rm \ such\ that\ }\\[0mm]
\qquad \|\mathcal{F}(\by,\btau_1)-\mathcal{F}(\by,\btau_2)\|\leq L_\mathcal{F} \left( \|\btau_1-\btau_2\|\right)\\[0mm]
\quad \qquad {\rm for\ all\ } \btau_1,\btau_2\in\mathbb{S}^d, {\rm \ a.e.} \ \by\in\Omega.\\[2mm]
{\rm (c)\ } \by\mapsto\mathcal{F}(\by,\btau) {\rm  \ is\ measurable\  on\ }\Omega,\  \mbox{for any}\ \, \btau\in\mathbb{S}^d.\hspace{5mm}\\[2mm]
{\rm (d)\ } \by\mapsto\mathcal{F}(\by,\bcero) \in Q.
\label{Fm}
\end{array}\right.
\end{eqnarray}

{\color{black}
\begin{eqnarray}
&&\hspace{0mm}\left\{ \begin{array}{ll} 
{\rm (a)\ } h:\Gamma_3 \times \mathbb{R}\times \mathbb{R} \to\mathbb{R}_+.\\[2mm]
{\rm (b)\ } {\rm There\ exists\ } L_h>0 {\rm \ such\ that\ }\\[0mm]
\qquad |h(\by,r_1,s_1)-h(\by,r_2,s_2)|\leq L_h \big(|r_1-r_2 |+|s_1-s_2 |)\\[0mm]
\qquad \qquad {\rm for\ all\ } r_1,\,r_2,\, s_1,\,s_2 \in\mathbb{R}, {\rm \ a.e.\ }\by \in \Gamma_3.\\[2mm]
{\rm (c)\ }\ \by\mapsto h(\by,r,s) {\rm  \ is\ measurable\ on\ }\Gamma_3,\  {\rm  for\ any\ }r,\, s\in\mathbb{R}.\\[2mm]
{\rm (e)\ }\by\mapsto h(\by,0,0)\in L^2(\Gamma_3).
\label{h}
\end{array}\right.
\end{eqnarray}}
\begin{eqnarray}
&&\hspace{0mm}\label{67}
\bff_0\in C(I;L^2(\Omega)^d),\qquad\qquad \bff_2\in C(I;L^2(\Gamma_2)^d),\\ [2mm]
&& \hspace{0mm}\beta\in L^\infty(\Omega).\label{be}\\ [2mm]
&& \hspace{0mm}k\in L^\infty(\Gamma_3),\quad k(\by)\ge 0\ \ 
{\rm \ a.e.\ \ }\by\in\Gamma_3.\label{p}\\ [2mm]
&&\hspace{0mm}\bu_0\in V,\qquad \bsigma_0\in Q.\label{Reg} \\[2mm]
&&\hspace{0mm} {\color{black} \left\{\begin{array}{l}
{\rm  There\ exist}\  G\in H^2(\Omega)\ {\rm and}\ M_0,\, M_1\in\mathbb{R}\ \mbox{such that}\\
 \	g=\gamma_0(G)\;{\rm on }\; \Gamma_3 \: {\rm and}\; 0<M_0\leq G(\by)\leq M_1\; \mbox{for all}\ \by\in \overline{\Omega}.\end{array}\right.}\label{gg}
\end{eqnarray}

{\color{black} Note that in (\ref{gg}) and below $\gamma_0:H^1(\Omega)\to L^2(\Gamma)$ denotes the trace operator. Moreover, note that the condition (\ref{gg}) make sense since $d\in\{2,3\}$ and, therefore, $H^2(\Omega)\subset C(\overline{\Omega})$.}

We turn in what follows to the variational analysis of Problem  $\mathcal{P}^{ve}$ and, to this end, besides the function $\xi:\Gamma_3\times I\to\mathbb{R}$ defined by
(\ref{csi}), we consider the irreversible stress field $\bsigma^{ir}:\Omega\times I\to\mathbb{S}^d$ and the set of {\color{black} admissible displacements} fields $K\subset V$ defined by
\begin{eqnarray}
&&\label{sir}
\bsigma^{ir}=\bsigma-\mathcal{E}\bepsilon(\bu),\\
&&K=\lbrace\, \bv\in V: v_\nu\leq g\; {\rm \ a.e.\ on}\ \Gamma_3\,\rbrace.\label{Km}
\end{eqnarray}
Then, using standard arguments we deduce
the following variational formulation of the problem. 

\medskip

\noindent \textbf{Problem $\mathcal{P}^{ve}_V$}. 
{\it Find an irreversible stress field $\bsigma^{ir}:I\to Q$, an accumulated penetration
function $\xi:I\to L^2(\Gamma_3)$ and a displacement field $\bu:I\to V$ such that}
\begin{align}
&\dot{\bsigma}^{ir}(t)=
\beta(\mathcal{E}\bepsilon(\bu(t))  +\bsigma^{ir}(t)-\mathcal{F}\bepsilon(\bu(t))),\quad\dot{\xi}(t)=u_\nu^+(t)\quad \forall\, t\in I, \label{Diffeq-Pve}\\
&\bsigma^{ir}(0)=\bsigma_0-\mathcal{E}\bepsilon(\bu_0),\quad \xi(0)=0,\label{InitCond-Pve}\\ 
&\bu(t)\in K, \quad   
\int_{\Omega}(\mathcal{E}\bepsilon(\bu(t))+\bsigma^{ir}(t))\cdot(\bepsilon(\bv)-\bepsilon(\bu(t))  \,dy\nonumber \\
& \qquad\qquad +\int_{\Gamma_3} ku_\nu^+(t)(v_\nu-u_\nu(t))\,da+
{\color{black}
\int_{\Gamma_3} h(\xi(t),u_\nu^+(t))(v_\nu^+-u_\nu^+(t))\,da}
\nonumber\\
&\qquad\qquad\geq 
\int_\Omega \bff_0(t)\cdot \bv-\bu(t)\,  dy+\int_{\Gamma_2} \bff_2(t)\cdot(\bv -\bu(t)\,da
\quad \forall\, \bv\in K,\ \, t\in I.  \label{VarIneq-Pve} 
\end{align}

\medskip
The unique solvability of Problem  $\mathcal{P}^{ve}_V$ is provided by the following existence and uniqueness result.

\begin{teo}\label{t8}
Assume $(\ref{E})$--$(\ref{gg})$. Then Problem $\mathcal{P}^{ve}_V$  has a unique solution which satisfies $\bsigma^{ir}\in C^1(I;Q)$, $\xi\in C^1(I;L^2(\Gamma_3))$, $\bu\in C(I;V)$.
\end{teo}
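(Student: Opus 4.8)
The strategy is to recast Problem $\mathcal{P}^{ve}_V$ as an instance of the abstract Problem $\mathcal{P}$ from Section \ref{s2} and then invoke Theorem \ref{ExistUniq-P}. First I would identify the abstract spaces: take $X = Q \times L^2(\Gamma_3)$ (endowed with the product Hilbert structure), $V$ the displacement space defined above, and $Z = L^2(\Gamma_3)$. The state variable is $x(t) = (\bsigma^{ir}(t), \xi(t))$ and the ``inequality'' variable is $u(t) = \bu(t)$. With these identifications, (\ref{Diffeq-Pve})--(\ref{InitCond-Pve}) is precisely the differential equation (\ref{DiffEq-P})--(\ref{InitCond-P}) with $x_0 = (\bsigma_0 - \mathcal{E}\bepsilon(\bu_0), 0)$ and
\[
F(t,(\bsigma^{ir},\xi),\bu) = \big(\beta(\mathcal{E}\bepsilon(\bu) + \bsigma^{ir} - \mathcal{F}\bepsilon(\bu)),\ u_\nu^+\big),
\]
while (\ref{VarIneq-Pve}) is the quasivariational inequality (\ref{VarIneq-P}) with $K$ as in (\ref{Km}), $\pi = \gamma_0|_{\Gamma_3}$ the normal trace restricted to $\Gamma_3$ (so $\pi\bv = v_\nu$ on $\Gamma_3$, which is linear and continuous by the trace theorem, giving (\ref{pi})), and the operator $A$ and functional $j$ read off from the remaining terms.

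\textbf{Verifying the abstract hypotheses.} The bulk of the work is checking (\ref{F})--(\ref{m}) for these concrete choices. For (\ref{F}): the map $t \mapsto F$ is constant in $t$ (hence continuous), and the Lipschitz bound (\ref{F})(b) follows from $\mathcal{E} \in \mathbf{Q}_\infty$, the Lipschitz property of $\mathcal{F}$ in (\ref{Fm})(b), boundedness of $\beta$ in (\ref{be}), the inequality $|r^+ - s^+| \le |r-s|$, and continuity of the trace $\bv \mapsto v_\nu$ on $\Gamma_3$; the constant is uniform in $t$, so it holds on every compact $J \subset I$. For the operator, I would set
\[
\langle A((\bsigma^{ir},\xi),\bu), \bv\rangle = \int_\Omega (\mathcal{E}\bepsilon(\bu) + \bsigma^{ir})\cdot\bepsilon(\bv)\,dy + \int_{\Gamma_3} k\, u_\nu^+ v_\nu\,da,
\]
and $j((\bsigma^{ir},\xi),\bu,\bv) = \int_{\Gamma_3} h(\xi, u_\nu^+)\, v_\nu^+\,da$ (note $j$ depends on $u$ through $u_\nu^+$ and on $x$ through $\xi$, exactly as the abstract framework permits). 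Property (\ref{A})(a) follows since $\bsigma^{ir}$ enters $A$ linearly and the $\xi$-component does not appear; (\ref{A})(b) uses boundedness of $\mathcal{E}$, of $k$, and Lipschitz continuity of $r \mapsto r^+$; and crucially (\ref{A})(c), strong monotonicity with modulus $m = m_\mathcal{E}$, follows from coercivity of $\mathcal{E}$ in (\ref{E})(b) together with the definition of the $V$-inner product via $\bepsilon$, plus the fact that $\int_{\Gamma_3} k(u_{1\nu}^+ - u_{2\nu}^+)(u_{1\nu} - u_{2\nu})\,da \ge 0$ since $k \ge 0$ and $r \mapsto r^+$ is monotone. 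For $j$: convexity and lower semicontinuity of $j(x,u,\cdot)$ in (\ref{j})(a) hold because $\bv \mapsto v_\nu^+$ is convex continuous and $h \ge 0$; the key estimate (\ref{j})(b) follows from the Lipschitz bound on $h$ in (\ref{h})(b), giving $\alpha$ proportional to $L_h$ (from the $\xi$-dependence) and $\beta$ proportional to $L_h$ times the trace constant (from the $u_\nu^+$-dependence). Finally (\ref{m}), i.e. $m > \beta$, is the compatibility condition that one must either impose as an additional smallness assumption or — more likely given the paper's style — it is absorbed into the hypotheses already listed (I would expect the statement to implicitly require $m_\mathcal{E}$ large relative to $L_h$, or else it is genuinely needed and I would flag it). The data regularity (\ref{67})--(\ref{Reg}) gives $f \in C(I;Z)$ in (\ref{f}) via the Riesz representation of the right-hand side of (\ref{VarIneq-Pve}) restricted to the $\Gamma_3$-pairing, and $x_0 \in X$ in (\ref{x0}).

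\textbf{Conclusion and the main obstacle.} Once all hypotheses are verified, Theorem \ref{ExistUniq-P} yields a unique $(x,u) \in C^1(I;X) \times C(I;V)$, i.e. $\bsigma^{ir} \in C^1(I;Q)$, $\xi \in C^1(I;L^2(\Gamma_3))$, $\bu \in C(I;V)$, which is exactly the claimed regularity. A final short step is to check that the right-hand side of (\ref{VarIneq-Pve}) really matches $(f(t), \pi\bv - \pi\bu(t))_Z$ for an appropriate $f$ — here one needs to route the volume and $\Gamma_2$ terms through a Riesz element as well, so strictly speaking the ``$f$'' of the abstract problem is a single $V^*$-element assembled from $\bff_0(t)$, $\bff_2(t)$; this is handled by (\ref{Riesz}) and the observation following it. The main obstacle I anticipate is not any single estimate but the bookkeeping around the term $v_\nu^+$ in $j$: one must be careful that the abstract inequality's structure $j(x,u,v) - j(x,u,u)$ with the convexity argument (used e.g. in the passage to (\ref{Inecuntil})) is compatible with $v \mapsto v_\nu^+$ being only piecewise linear, and that the monotonicity contribution of the $k u_\nu^+$ term is correctly placed in $A$ rather than $j$ so that strong monotonicity (\ref{A})(c) genuinely holds; getting this split right, and confirming $m_\mathcal{E} > \beta_{\text{abstract}}$, is where the argument could stumble.
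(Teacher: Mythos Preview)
Your overall strategy --- recast Problem $\mathcal{P}^{ve}_V$ as an instance of the abstract Problem $\mathcal{P}$ and invoke Theorem \ref{ExistUniq-P} --- is exactly what the paper does, with $X=Q\times L^2(\Gamma_3)$, the same $F$, $A$, $j$, $K$, and $x_0$. The one place where your identification goes wrong is the pair $(Z,\pi)$. The right-hand side of (\ref{VarIneq-Pve}) consists of integrals over $\Omega$ and $\Gamma_2$, not over $\Gamma_3$, so taking $Z=L^2(\Gamma_3)$ and $\pi\bv=v_\nu|_{\Gamma_3}$ cannot reproduce it; your proposed patch via (\ref{Riesz}) does not resolve this, because the abstract inequality (\ref{VarIneq-P}) is written specifically as $(f(t),\pi v-\pi u)_Z$ with $f:I\to Z$, and (\ref{Riesz}) merely rewrites that same pairing as a $V^*$-action without changing what $Z$ or $\pi$ are. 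The paper instead takes $Z=L^2(\Omega)^d\times L^2(\Gamma_2)^d$, $\pi\bv=(\bv,\bv|_{\Gamma_2})$, and $\bff(t)=(\bff_0(t),\bff_2(t))$, which fits the right-hand side directly and makes (\ref{f}) and (\ref{pi}) immediate from (\ref{67}) and the trace theorem. With that correction your verification of (\ref{F})--(\ref{A}) and (\ref{j})(a) is fine and matches the paper.

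On the smallness issue you flag: the paper asserts that (\ref{j}) holds with $\beta=0$, whereas you argue that the $u_\nu^+$-argument of $h$ forces $\beta$ proportional to $L_h$ times a trace constant. Your reading is the careful one --- since $j(\bx,\bu,\bv)=\int_{\Gamma_3}h(\xi,u_\nu^+)v_\nu^+\,da$ genuinely depends on $\bu$, the four-term estimate in (\ref{j})(b) picks up a $\|u_1-u_2\|_V$ contribution and (\ref{m}) becomes a real restriction $m_{\mathcal E}>\beta$. The paper's claim $\beta=0$ appears to be an oversight (it would be correct had $h$ depended on $\xi$ alone), so your instinct to flag a compatibility condition between $m_{\mathcal E}$ and $L_h$ is warranted rather than a stumble.
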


\begin{proof} We consider the product spaces $X=Q\times L^2(\Gamma_3)$ and $Z=L^2(\Omega)^d\times L^2(\Gamma_2)^d$  endowed with the canonical inner products $(\cdot,\cdot)_X$ and  $(\cdot,\cdot)_Z$, respectively, as well as 
the operators $F:I\times X\times V\to X$,  $A:X\times V\to V^{*}$, $\pi:V\to Z$ and  the functions $j:X\times V\times V\to\mathbb{R}$, $\bff:I\to V^*$ given by
\begin{align}
&F(t,\bx,\bu )=\Big(\beta\big(\mathcal{E}\bepsilon(\bu))+\bsigma-\mathcal{F} \bepsilon(\bu)\big),u_\nu^+\Big),\label{F-Ejem} \\ 
& \la A(\bx,\bu),\bv\ra =\left(\mathcal{E}\bepsilon(\bu)+\bsigma,\bepsilon(\bv) \right)_Q+\int_{\Gamma_3} ku_\nu^+v_\nu \; da,\label{A-Ejem}\\
& \pi\bv=(\bv,\bv|_{\Gamma_2}),\label{pi-Ejem}\\
&{\color{black}j(\bx,\bu,\bv)=\int_{\Gamma_3}h(\xi,u^+_\nu)v_\nu^+\,da,}\label{jem}\\
& (\bff (t),\bz)_Z=\int_\Omega \bff_0(t)\cdot \bz_1\;  dx+\int_{\Gamma_2} \bff_2(t)\cdot \bz_2 \; da\label{fem}
\end{align}
for all $t\in I$, $\bx=(\bsigma,\xi)\in X$, $\bu,\bv\in V$, $\bz=(\bz_1,\bz_2)\in Z$. Note that in (\ref{pi-Ejem}) notation $\bv|_{\Gamma_2}$ represents the trace of $\bv$ in $\Gamma_2$.
Moreover, consider the element of $X$ given by
\begin{equation}\label{100}
\bx_0=(\bsigma_0-\mathcal{E}\bepsilon(\bu_0),0).
\end{equation}
Then, it is easy to see that
Problem $\mathcal{P}^{ve}_V$ is equivalent to the problem of finding two functions $\bx=(\bsigma^{ir},\xi):I\to X$ and  $\bu:I\to V$ such that
 \begin{align}
&\dot{\bx}(t)=F(t,\bx(t),\bu(t))\qquad \forall\, t\in I, \label{1mm}\\
&\bx(0)=\bx_0,\label{2mm}\\
& \bu(t)\in K, \qquad  \langle  A(\bx(t),\bu(t)),\bv-\bu(t)\rangle+j(\bx(t),\bu(t),\bv)-j(\bx(t),\bu(t),\bu(t)) \nonumber \\
&\qquad \qquad\qquad\qquad \geq \la \bff(t),\pi \bv-\pi\bu(t)\ra_Z\quad\ \forall\, \bv\in K,\ t\in I.  \label{3mm}
\end{align}

Remark that, with the previous notation, all the conditions in Theorem \ref{ExistUniq-P} are satisfied for the differential variational inequality (\ref{1mm})--(\ref{3mm}) For instance, it is easy to see that assumptions (\ref{E}), (\ref{Fm}), (\ref{be}) and  (\ref{p})  imply that the operators (\ref{F-Ejem}) and (\ref{A-Ejem}) satisfy conditions (\ref{F}) and (\ref{A}), respectively, the later with $m=m_{\cal E}$. Moreover, the regularity $(\ref{Reg})$ and $(\ref{67})$ imply that $(\ref{x0})$ and 
$(\ref{f})$ hold, too. {\color{black}In addition,  assumption (\ref{gg}) combined with standard arguments implies that the set (\ref{Km})
satisfies condition (\ref{K}) and,}
using the
assumption (\ref{h}) and the Sobolev trace inequality  it is easy to see that condition (\ref{j}) holds with $\beta=0$. To conclude, we deduce from Theorem \ref{ExistUniq-P} the existence of a unique solution $\bx=(\bsigma^{ir},\xi)\in C^1(I;X)$, $\bu\in C(I;V)$ which satisfies   (\ref{1mm})--(\ref{3mm}). Then, using the equivalence between Problem
$\mathcal{P}^{ve}_V$ and the differential quasivariational inequality (\ref{1mm})--(\ref{3mm}), we deduce that
$(\bsigma^{ir},\xi,\bu)$ is the unique solution to Problem
$\mathcal{P}^{ve}_V$ with regularity $\bsigma^{ir}\in C^1(I;Q)$, $\xi\in C^1(I;L^2(\Gamma_3))$, $\bu\in C(I;V)$,
which concludes the proof.
\end{proof}

We now study the continuous dependence of the solution to Problem $\mathcal{P}^{ve}_V$ with respect to the data.  Various cases can be consideblack and various convergence results can be obtained, based on Theorem \ref{t1}.   Here, for simplicity, we restrict ourselves to provide only one example, which concerns the dependence of the solution with respect to the density of surface tractions and the thickness $g$. Therefore, we assume in what follows that
$(\ref{E})$--$(\ref{gg})$ hold and, moreover,
we assume that there exists two functions $\theta$ and $\widetilde{\bff}_{2}$ such that 
\begin{eqnarray}
&&\label{x1}\theta\in C(I;\mathbb{R}),\quad
\widetilde{\bff}_2\in L^2(\Gamma_2)^d,\\[2mm]
&&\label{x2}\bff_{2}(t)=\theta(t)\widetilde{\bff}_{2}\qquad\forall\, t\in I.
\end{eqnarray}
In addition, for each $n\in\mathbb{N}$ we  consider a perturbation  $\bff_{2n}$ and {\color{black}$g_n=\gamma_0(G_n)$} of $\bff_2$ and {\color{black}$g=\gamma_0(G)$}, respectively, such that
\begin{eqnarray} 
&&\label{x3}\bff_{2n}(t)=\theta(t)\widetilde{\bff}_{2n}\qquad\forall\, t\in I\quad{\rm with}\quad \widetilde{\bff}_2\in L^2(\Gamma_2)^d.\\[2mm]
&&\hspace{0mm} {\color{black} 
	G_n\in H^{2}(\Omega)\ \ {\rm and }\ \ 0<M_0\leq G_n(\by)\leq M_1\ \ \mbox{for all}\ \by\in \overline\Omega}\label{x4}.\\ [2mm]
&&\label{x5}\widetilde{\bff}_{2n}(t)\rightharpoonup\bff_{2}(t) \quad{\rm in}\ \ L^2(\Gamma_2)^d\quad\forall\, t\in I.\\ [2mm]
&&\label{x6}{\color{black} G_n\rightharpoonup G \quad{\rm in}\ \  H^{2}(\Omega).}
\end{eqnarray}
For each $n\in\mathbb{N}$ we consider
Problem  $\mathcal{P}^{ve}_{Vn}$ obtained by replacing in Problem $\mathcal{P}^{ve}_{V}$ the data $\bff_2$ and $g$ with $\bff_{2n}$ and $g_n$, respectively.
Then, Theorem \ref{t8} guarantees that 
$\mathcal{P}^{ve}_{Vn}$ has a unique solution $(\bsigma_n^{ir},\xi_n,\bu_n)$, with regularity $\bsigma^{ir}_n\in C(I;Q)$, $\xi_n\in C(I;L^2(\Gamma_3)$,
$\bu_n\in C(I;V)$. Moreover we have the following convergence result.

\begin{teo}\label{t5}
	Assume $(\ref{E})$--$(\ref{gg})$ and $(\ref{x1})$--$(\ref{x6})$. Then, the  solution $(\bsigma_n^{ir},\xi_n,\bu_n)$ of Problem $\mathcal{P}^{ve}_{Vn}$ converges   to the solution $(\bsigma^{ir},\xi,\bu)$ of Problem $\mathcal{P}^{ve}_{V}$  as $n\to\infty$, i.e., for each $t\in I$ we have
	\begin{equation*}
	\bsigma_n^{ir}(t)\to\bsigma^{ir}(t)
	\ \ {\rm in\ } Q,\quad \xi_n(t)\to\xi(t)
	\ \ {\rm in\ } L^2(\Gamma_3),\quad 
	\bu_n(t)\to\bu(t)
	\ \  {\rm in\ } V\ \ {\rm as}\ n\to\infty.
	\end{equation*}
\end{teo}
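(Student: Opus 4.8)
The plan is to cast Problem $\mathcal{P}^{ve}_{Vn}$ into the abstract framework of Problem $\mathcal{P}_n$ and then simply invoke Theorem \ref{t1}. Recall that in the proof of Theorem \ref{t8} we already identified the spaces $X=Q\times L^2(\Gamma_3)$, $V$ as the displacement space, $Z=L^2(\Omega)^d\times L^2(\Gamma_2)^d$, and the data $F$, $A$, $j$, $\pi$, $\bff$, $\bx_0$ through \eqref{F-Ejem}--\eqref{100}, so that $\mathcal{P}^{ve}_V$ is equivalent to the differential quasivariational inequality \eqref{1mm}--\eqref{3mm}. For the perturbed problem I would take the \emph{same} $X$, $V$, $Z$, $\pi$, $F$, $A$, $j$ and $\bx_0$ (since only $\bff_2$ and $g$ are perturbed, the operators $F$ and $A$ and the initial datum are unchanged: $F_n=F$, $A_n=A$, $j_n=j$, $x_{0n}=x_0$), and set $K_n=\{\,\bv\in V:v_\nu\le g_n\ \text{a.e. on }\Gamma_3\,\}$ together with $\bff_n$ defined by \eqref{fem} with $\bff_2$ replaced by $\bff_{2n}$. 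With these identifications $\mathcal{P}^{ve}_{Vn}$ is equivalent to Problem $\mathcal{P}_n$.

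Next I would verify the hypotheses of Theorem \ref{t1} one at a time. Assumptions \eqref{F}--\eqref{pi} and \eqref{F}$_n$--\eqref{f}$_n$ hold exactly as checked in the proof of Theorem \ref{t8} (note $\beta=0$ there, $m=m_{\mathcal E}$). Since $F_n=F$, $A_n=A$, $j_n=j$, conditions \eqref{Const-Conv}, \eqref{F-Conv}, \eqref{A-Conv}, \eqref{jn-Conv}(a),(b) hold trivially (with $\Gamma_n=\Lambda_n=0$ etc.), and \eqref{jn-Conv}(c) reduces to a weak-strong continuity statement for $\bx$ fixed; because $j$ from \eqref{jem} involves $h$ applied to $(\xi,u_\nu^+)$ and integrated against $v_\nu^+$, one uses the compactness of the trace map $V\hookrightarrow L^2(\Gamma_3)$ (so $u_{n,\nu}^+\to u_\nu^+$ strongly in $L^2(\Gamma_3)$ when $\bu_n\rightharpoonup\bu$ in $V$), the Lipschitz continuity of $h$ from \eqref{h}(b), and dominated convergence to pass to the limit — in fact one gets equality, hence the $\limsup$ bound. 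Condition \eqref{x0n-Conv} is immediate since $x_{0n}=x_0$. Condition \eqref{pi-Conv} is the compactness of the trace operator $V\to L^2(\Gamma_2)^d$, which holds by Rellich–Kondrachov. For \eqref{fn-Conv} I would use Remark \ref{rem1}: with $\bff_{2n}(t)=\theta(t)\widetilde{\bff}_{2n}$ and \eqref{x5}, hypotheses \eqref{ffq}--\eqref{wide} are satisfied after translating the weak convergence of $\widetilde{\bff}_{2n}$ in $L^2(\Gamma_2)^d$ into weak convergence of the corresponding elements $\widetilde{\bff}_n$ of $Z$ via \eqref{fem}.

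The one genuinely nontrivial point — and the main obstacle — is the Mosco convergence \eqref{Kn-Conv} of $K_n$ to $K$. Part (b) is the easy half: if $v_{n,\nu}\le g_n$ a.e.\ on $\Gamma_3$, $\bv_n\rightharpoonup\bv$ in $V$, then $v_{n,\nu}\to v_\nu$ in $L^2(\Gamma_3)$ by trace compactness, and $g_n=\gamma_0(G_n)\to\gamma_0(G)=g$ strongly in $L^2(\Gamma)$ because $G_n\rightharpoonup G$ in $H^2(\Omega)$ and the trace $H^2(\Omega)\to L^2(\Gamma)$ is compact; passing to the limit in the inequality $v_{n,\nu}\le g_n$ gives $v_\nu\le g$, i.e.\ $\bv\in K$. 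Part (a) requires, given $\bv\in K$, constructing $\bv_n\in K_n$ with $\bv_n\to\bv$ strongly in $V$. Here the uniform bounds $0<M_0\le G_n\le M_1$ and $0<M_0\le G\le M_1$ from \eqref{gg}, \eqref{x4} are crucial: since $g\ge M_0>0$ and $g_n\ge M_0>0$, a standard recovery-sequence device works — set $\bv_n=\lambda_n\bv$ with $\lambda_n=\min\{1,\ \text{ess inf}_{\Gamma_3} g_n/\|v_\nu^+\|_{L^\infty}\}$ if $v_\nu\in L^\infty$, or more robustly, use that $g_n\to g$ in $L^2(\Gamma_3)$ with a uniform positive lower bound so that the truncation $\bv_n$ defined by scaling $\bv$ pointwise (or a convex-combination argument $\bv_n=(1-t_n)\bv+t_n\bv^*$ with $\bv^*$ a fixed interior point and $t_n\to 0$) lands in $K_n$ and converges strongly to $\bv$; the strict positivity $M_0>0$ is exactly what prevents the constraint sets from degenerating. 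Once \eqref{Kn-Conv} is established, Theorem \ref{t1} applies and yields $\bu_n(t)\to\bu(t)$ in $V$ and $\bx_n(t)=(\bsigma_n^{ir}(t),\xi_n(t))\to\bx(t)=(\bsigma^{ir}(t),\xi(t))$ in $X=Q\times L^2(\Gamma_3)$, which is precisely the asserted convergence, and the proof concludes by reading off the two components.
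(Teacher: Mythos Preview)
Your overall strategy is exactly the paper's: identify $\mathcal P^{ve}_{Vn}$ with the abstract Problem $\mathcal P_n$ (with $F_n=F$, $A_n=A$, $j_n=j$, $x_{0n}=x_0$ unchanged), check the hypotheses of Theorem~\ref{t1}, and observe that the only substantive point is the Mosco convergence \eqref{Kn-Conv} of $K_n$ to $K$; your treatment of part~(b) and of \eqref{fn-Conv} via Remark~\ref{rem1} also matches the paper.

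The one place where your argument differs --- and is weaker --- is the recovery sequence in Mosco part~(a). Your first device, the scalar scaling $\lambda_n\bv$ with $\lambda_n$ involving $\|v_\nu^+\|_{L^\infty}$, fails in general because $v_\nu\in H^{1/2}(\Gamma)$ need not be bounded when $d=3$. Your fallback convex-combination $(1-t_n)\bv$ can be made to work, but it requires \emph{uniform} convergence $g_n\to g$ on $\Gamma_3$ (not just $L^2$ convergence, which is all you invoke); this is in fact available via the compact embedding $H^2(\Omega)\hookrightarrow C(\overline\Omega)$ for $d\le 3$, but you do not say so. The paper instead uses the clean pointwise multiplicative construction
\[
\bv_n=\frac{G_n}{G}\,\bv,
\]
which immediately gives $(v_n)_\nu=\dfrac{g_n}{g}\,v_\nu\le g_n$ (so $\bv_n\in K_n$) and $\bv_n\to\bv$ in $V$ via $G_n\to G$ in $H^1(\Omega)$ (from the compact inclusion $H^2(\Omega)\subset H^1(\Omega)$) together with the uniform lower bound $G\ge M_0>0$. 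This construction exploits the specific \emph{multiplicative} structure of the constraint $v_\nu\le g$ and the fact that the obstacle comes from an $H^2$ function bounded below, avoiding any need for $L^\infty$ control of $v_\nu$ or for uniform convergence of the traces.
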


\begin{proof}
First, we remark that  the set of constraints associated to Problem $\mathcal{P}^{ve}_{Vn}$ is given by
\begin{equation}\label{Kmn}
K_n=\lbrace \bv\in V: v_\nu\leq g_n\; {\rm \ a.e.\ on\ }\Gamma_3\rbrace.
\end{equation}

 {\color{black} Let $v\in K$. Then, 
 assumptions (\ref{gg}) and (\ref{x4}) allow us consider the sequence $\{v_n\}\subset V$ defined by
 $v_n=\frac{G_n}{G}\,v$, for each $n\in\mathbb{N}$ 
 We now use definitions (\ref{Km}),  (\ref{Kmn}) 
 and equalities  $g_n=\gamma_0(G_n)$, $g=\gamma_0(G)$ to see that $v_n\in K_n$ for each $n\in\mathbb{N}$. Moreover, using (\ref{x4}),  (\ref{x6}) and the compactness of the inclusion $H^2(\Omega)\subset H^1(\Omega)$  (see, for instance \cite{Ad1975}) it is easy to see that
 $v_n\to v$ in $V$. We conclude from here that condition (\ref{Kn-Conv})(a) is satisfied. 
 
 Assume now that $\lbrace v_n\rbrace$  is a sequence of elements of $V$ such that
$v_n\in K_n$ for all $n\in\mathbb{N}$ and  $v_n\wl v$ in $V$. Then, 
\begin{equation}\label{56}
 v_{n\nu}\leq g_n\;\ \  {\rm \ a.e.\ on\ }\Gamma_3,\ \ \mbox{for all $n\in\mathbb{N}$.}
\end{equation}
 Moreover, compactness arguments guarantee that the convergences
$v_n\wl v$ in $V$ and $G_n\wl G$ in $H^{2}(\Omega)$ imply that
$v_{n\nu}\to v_\nu$ and  $g_n\to g$, both in $L^2(\Gamma_3)$.
Therefore, passing to some subsequences, again denoted by
 $\lbrace v_n\rbrace$ and  $\lbrace g_n\rbrace$,
 we can assume that
 \begin{equation}\label{57}
  v_{n\nu}\to v_\nu,\quad  g_n\to g\quad  {\rm \ a.e.\ on\ }\Gamma_3.
 \end{equation}
It follows now from (\ref{56}) and (\ref{57}) that
$v_{\nu}\leq g\   {\rm \ a.e.\ on\ }\Gamma_3$
which shows that {\color{black}$v\in K$} and, hence, 
(\ref{Kn-Conv})(b) holds.
The proof of Theorem \ref{t5} is now a direct consequence of Theorem \ref{t1} and Remark \ref{rem1}.}
\end{proof}

We now turn to the optimal control of 
Problem $\mathcal{P}^{ve}_{V}$ and, to this end, we shall use Theorem \ref{t2}. For simplicity, we restrict ourselves to provide the following example. 

Assume  that
$(\ref{E})$--$(\ref{gg})$ hold
and denote by $W$ the product space  {\color{black} 
$W=L^2(\Gamma_2)^d\times H^{2}(\Omega)$ endowed with the canonical Hilbertian structure.
Moreover, consider the set $U\subset W$ defined by
\begin{eqnarray}
&&\label{UU}
U=\lbrace\, q=(\widetilde{\bff}_2,{G})\in W\ :\ \|\widetilde{\bff}_2\|_{L^2(\Gamma_2)^d}\leq M_2\ ,\  \|G\|_{H^2(\Omega)}\leq M_3,\\
&&\qquad\quad \ M_0\leq{G}(\by)\leq M_1\ \ {\rm for\ all}\ \by\in\overline\Omega \,\rbrace\nonumber
\end{eqnarray}
where $M_0$,  $M_1$, $M_2$ and $M_3$ are given positive constants such that $M_0\le M_1$ and $M_3\ge M_0(mes(\Omega)^{\frac{1}{2}}$. Note that  the set $U$ is nonempty since, for instance, $(\bzero_{L^2(\Gamma_2)^d},M_0)\in U.$} For any {\color{black}$q=(\widetilde{\bff}_2,{G})\in U$} we consider
Problem  $\mathcal{P}^{ve}_{Vq}$ obtained by replacing in Problem $\mathcal{P}^{ve}_{V}$ the data $\bff_2$ and {\color{black}$g$} with  
$\bff_{2q}$ and {\color{black}$g_q$}, respectively, where
\begin{eqnarray*} 
\bff_{2q}(t)=\theta(t)\widetilde{\bff}_{2}\qquad\forall\, t\in I,\qquad  {\color{black}g_q=\gamma_0({G})}
\end{eqnarray*}
and $\theta\in C(I;\mathbb{R})$. Then, Theorem \ref{t8} guarantees that 
$\mathcal{P}^{ve}_{Vq}$ has a unique solution $(\bsigma_q^{ir},\xi_q,\bu_q)$, we regularity $\bsigma^{ir}_q\in C^1(I;Q)$, $\xi_q\in C^1(I;L^2(\Gamma_3)$, $\bu_q\in C(I;V)$. 
Consider now the following optimal control problem in which, for any $q\in U$, $u_{q\nu}$ represents the normal component of the function $\bu_q$.

\medskip

\noindent \textbf{Problem $\mathcal{Q}^{ve}$}. 
{\it  Given $t\in I$ and $\phi\in L^2(\Gamma_3)$, find $q^*=(\bff_2^*,{\color{black}G^*})\in U$ such that }
\begin{align}\label{Problem:Q-Ejem}
\int_{\Gamma_3} |u_{q^*\nu}(t)-\phi|^2\,da\le\int_{\Gamma_3} |u_{q\nu}(t)-\phi|^2\,da\qquad\forall\, q\,\in U.
\end{align}

We have the following existence result.

\begin{teo}\label{t6}
Under the previous assumptions,
the optimal control problem $\mathcal{Q}^{ve}$ has at least one solution $q^*=(\bff_2^*,{\color{black}G^*})\in U$.
\end{teo}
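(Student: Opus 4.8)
The plan is to cast Problem $\mathcal{Q}^{ve}$ as a particular instance of the abstract optimal control problem $\mathcal{Q}$ from Section~\ref{s4} and then invoke Theorem~\ref{t2}. First I would fix the data in the abstract setting: take $W=L^2(\Gamma_2)^d\times H^2(\Omega)$ with its Hilbert structure, $U$ as defined in (\ref{UU}), and for $q=(\widetilde{\bff}_2,G)\in U$ let $F_q$, $x_{0q}$, $A_q$, $j_q$ be the operators and functions (\ref{F-Ejem})--(\ref{jem}) constructed in the proof of Theorem~\ref{t8}, with $K_q=\{\bv\in V:v_\nu\le\gamma_0(G)\}$ and $f_q(t)=\theta(t)\widetilde{\bff}_2$ (more precisely the corresponding element of $Z$ via (\ref{fem})). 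Note that $F_q$, $x_{0q}$, $A_q$, $j_q$ do not actually depend on $q$, so the only $q$-dependent data are $K_q$ and $f_q$. The cost functional is $\mathcal{L}(\bx,\bu,q)=\int_{\Gamma_3}|u_\nu-\phi|^2\,da$, which depends only on $\bu$ through its normal trace.

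The second step is to verify the hypotheses of Theorem~\ref{t2}. Assumptions (\ref{F})$_q$--(\ref{f})$_q$ and (\ref{pi}) hold by the computations already carried out in the proof of Theorem~\ref{t8} (with $m=m_{\mathcal E}$ and $\beta=0$), uniformly in $q$ since the constants there do not involve $g$. Assumption (\ref{U}) holds because $U$ is convex, closed and bounded in the Hilbert space $W$, hence weakly closed; in particular (\ref{Ubound}) also holds, so the coercivity condition (\ref{CoerL}) is not needed. For (\ref{LimInfL}): if $\bu_n\to\bu$ in $V$ then $u_{n\nu}\to u_\nu$ in $L^2(\Gamma_3)$ by continuity of the normal trace, hence $\int_{\Gamma_3}|u_{n\nu}-\phi|^2\,da\to\int_{\Gamma_3}|u_\nu-\phi|^2\,da$, so $\mathcal{L}$ is in fact continuous in the required mode and the $\liminf$ inequality is trivial. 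Finally, along a sequence $q_n=(\widetilde{\bff}_{2n},G_n)\wl q=(\widetilde{\bff}_2,G)$ in $W$ one has $\widetilde{\bff}_{2n}\wl\widetilde{\bff}_2$ in $L^2(\Gamma_2)^d$ and $G_n\wl G$ in $H^2(\Omega)$, and the constraint $M_0\le G_n\le M_1$ on $\overline\Omega$ passes to the limit by the compact embedding $H^2(\Omega)\subset C(\overline\Omega)$; these are exactly the hypotheses (\ref{x3})--(\ref{x6}) needed in the proof of Theorem~\ref{t5}, so the Mosco convergence (\ref{Kn-Conv}) of $K_{q_n}$ to $K_q$ holds, and (\ref{fn-Conv}) holds via Remark~\ref{rem1} (with $\widetilde f_{q_n}$ the element of $Z$ associated to $\widetilde{\bff}_{2n}$). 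The remaining convergence assumptions (\ref{Const-Conv}), (\ref{F-Conv})--(\ref{A-Conv}), (\ref{jn-Conv}) are satisfied trivially because $F$, $A$, $j$, $x_0$ do not depend on $q$ (so one may take $\Gamma_n=\Lambda_n=0$, $\delta_n=0$, etc.), and (\ref{pi-Conv}) holds by the compactness of the trace operator.

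With all hypotheses of Theorem~\ref{t2} in place, I would conclude that for the fixed $t\in I$ the abstract optimal control problem $\mathcal{Q}$ has at least one solution $q^*\in U$; translating back through the equivalence between Problem $\mathcal{P}^{ve}_{Vq}$ and the abstract differential quasivariational inequality (established in the proof of Theorem~\ref{t8}), this $q^*=(\bff_2^*,G^*)$ solves Problem $\mathcal{Q}^{ve}$, which is the desired assertion.

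I expect the only genuine point requiring care — rather than a real obstacle — to be the bookkeeping that identifies $K_q$ with a Mosco-convergent family and checks that the uniform bounds (\ref{Const-Conv}) on the structural constants survive the perturbation of $g$; since $g$ enters only through the constraint set and not through $A$, $j$ or $F$, this is essentially the content of Theorem~\ref{t5} already proved, so the work reduces to assembling citations. A secondary point worth stating explicitly is the weak closedness of $U$: one should note that the pointwise bound $M_0\le G\le M_1$ defines a convex set and that, together with the norm bounds, $U$ is bounded, convex and strongly closed in $W$, hence weakly closed, so that Theorem~\ref{Weierstrass}(i) applies through Theorem~\ref{t2}.
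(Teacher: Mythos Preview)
Your proposal is correct and follows essentially the same approach as the paper: cast $\mathcal{Q}^{ve}$ as an instance of the abstract problem $\mathcal{Q}$, verify that $U$ satisfies (\ref{U}) and (\ref{Ubound}), that $\mathcal{L}$ satisfies (\ref{LimInfL}), and that the convergence hypotheses along weakly convergent sequences $q_n\wl q$ reduce to the Mosco and weak-convergence facts already established in Theorem~\ref{t5}, then apply Theorem~\ref{t2}. The paper's own proof is much more terse (three sentences) but structurally identical; the only minor discrepancy is that you cite Remark~\ref{rem1} where the paper cites Remark~\ref{rem2}, which is the version tailored to the optimal control setting, though the content you need is the same.
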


\begin{proof}
It is easy to see that the set (\ref{UU}) satisfies condition (\ref{U}) and (\ref{Ubound}) on the space $W=L^2(\Gamma_2)^d\times {\color{black}H^2(\Omega)}$. Moreover, the function
${\cal L}:X\times V\times U\to\mathbb{R}$ defined by
\[{\cal L}(\bx,\bu,q)= \int_{\Gamma_3} |u_{q\nu}(t)-\phi|^2\,da\qquad\forall\,\bx\in X,\ \bu\in V, q\,\in U\]
satisfies condition  (\ref{LimInfL}) with $X=Q\times L^2(\Gamma_3)$. We now use Theorem \ref{t2} and Remark \ref{rem2} to conclude the proof.
\end{proof}
We end this section with some comments and mechanical interpretation of our results. First, the variational formulation $\mathcal{P}^{ve}_V$ of Problem $\mathcal{P}^{ve}$, in terms of the irreversible stress, accumulated penetration and displacement field, is new and nonstandard. Nevertheless, we refer to solution $(\bsigma^{ir},\xi,\bu)$ of $\mathcal{P}^{ve}_V$ as the weak solution of the frictionless contact problem $\mathcal{P}^{ve}$. Therefore, Theorem \ref{t8} provides the unique solvability of this viscoelastic contact problem. Next, Theorem \ref{t5} shows that the weak solution depends continuously on the density of surface tractions and the {\color{black}thickness} of the rigid-elastic layer. Finally, the
mechanical interpretation of the optimal control problem $\mathcal{Q}^{ve}$ is the following:
given a contact process of the form $(\ref{S58})$--$(\ref{S64})$,  $(\ref{x1})$, 
$(\ref{x2})$ and a time moment $t\in I$,
we are looking for a pair $q=(\widetilde{\bff}_2^*,{\color{black}G^*})\in U$
such that
the corresponding penetration of the viscoelastic body  at $t$ is as close as possible to the ``desiblack  penetration" $\phi$.
Theorem $\ref{t6}$ guarantees the existence of at least one solution to this problem.

\section*{Acknowledgements}
This work   has received funding from the European Union's Horizon 2020 Research
and Innovation Programme under the Marie Sklodowska-Curie Grant Agreement No
823731 CONMECH. It has also been partially supported by the Project PIP No. 0275 from CONICET-UA, Rosario, Argentina. {\color{black}The authors would like to thank the anonymous referee whose insightful comments have benefit the contents of this article.}

\noindent

\bibliographystyle{elsarticle-num} 
\bibliography{Biblio}

\begin{thebibliography}{10}
\expandafter\ifx\csname url\endcsname\relax
  \def\url#1{\texttt{#1}}\fi
\expandafter\ifx\csname urlprefix\endcsname\relax\def\urlprefix{URL }\fi
\expandafter\ifx\csname href\endcsname\relax
  \def\href#1#2{#2} \def\path#1{#1}\fi

\bibitem{St1964}
G.~Stampacchia, Formes bilin\'eaires coercitives sur les ensembles convexes,
  C.R. Acad. Sc. Paris 258 A (1964) 4413--4416.

\bibitem{LiSt1967}
J.~L. Lions, G.~Stampacchia, Variational inequalities, Comm. Pure Appl. Math.
  20 (1967) 493--519.

\bibitem{Br1972}
H.~Brezis, Probl\`emes unilat\'eraux, J. Math. Pures et Appl. 51 (1972) 1--168.

\bibitem{Ne2012}
{\color{black}J}.~{\color{black}Nec\v{a}s}, {\color{black}Direct methods in the
  theory of elliptic equations}, {\color{black}Springer},
  {\color{black}Berlin}, {\color{black}2012}.

\bibitem{Li1969}
J.~L. Lions, Quelques m\'ethodes de r\'esolution des probl\`emes aux limites
  non lin\'eaires, Dunod-Gauthier Villars, Paris, 1969.

\bibitem{EkTe1973}
{\color{black}I}.~{\color{black}Ekeland},
  {\color{black}R}.~{\color{black}Temam}, {\color{black}Convex Analysis and
  Variational problems}, {\color{black}North-Holland},
  {\color{black}Amsterdam-Oxford}, {\color{black}1976}.

\bibitem{BaCa1984}
{\color{black}C}.~{\color{black}Baiocchi},
  {\color{black}A}.~{\color{black}Capelo}, {\color{black}Variational and
  quasivariational inequalities- Applications to free boundary problems},
  {\color{black}John Wiley}, {\color{black} New York}, {\color{black}1984}.

\bibitem{KiSt1980}
D.~Kinderlehrer, G.~Stampacchia, An introduction to variational inequalities
  and their applications, Academic Press, New York, 1980.

\bibitem{Cr1984}
J.~Crank, Free and moving boundary problems, Clarendon Press, Oxford, 1984.

\bibitem{Ro1987}
J.~F. Rodriguez, Obstacle problems in mathematical physics, North-Holland,
  Amsterdam, 1987.

\bibitem{AlSo1993}
V.~Alexiades, A.~D. Solomon, Mathematical modeling of melting and freezing
  processes, Hemisphere Publishing Corp., Washington, 1993.

\bibitem{DeMiPa2003}
Z.~Denkowski, S.~Mig\'orski, N.~S. Papageorgiou, An introduction to nonlinear
  analysis: {T}heory, Springer, New York, 2003.

\bibitem{Gw2003}
{\color{black}J}.~{\color{black}Gwinner}, {\color{black}Time dependent
  variational inequalities - some recent trends, in: P. Daniele, A. Maugeri
  (Eds.), Equilibrium Problems and Variational Models},
  {\color{black}Springer-Verlag}, {\color{black}New York}, {\color{black}2003}.

\bibitem{Ba1971}
C.~Baiocchi, Sur un probl\`eme \`a fronti\`ere libre traduisant le filtrage de
  liquides \`a travers des milieux poreux, C.R. Acad. Sc. Paris 273 A (1971)
  1215--1217.

\bibitem{Du1973}
G.~Duvaut, R\'esolution d\textsc{\char13} un prob\`eme de {S}tefan (fusion
  d\textsc{\char13} un bloc de glace \`a z\'ero degr\'e), C. R. Acad. Sci.
  Paris 276 A (1973) 1461--1463.

\bibitem{Ta1979}
D.~A. Tarzia, Sur le probl\`eme de {S}tefan \`a deux phases, C.R. Acad. Sci.
  Paris 288 A (1979) 941--944.

\bibitem{DuLi1976}
{\color{black}G}.~{\color{black}Duvaut}, {\color{black}J.
  L}.~{\color{black}Lions}, {\color{black}Inequalities in Mechanics and
  Physics}, {\color{black} Springer-Verlag}, {\color{black}Berlin},
  {\color{black}1976}.

\bibitem{Pa1985}
P.~Panagiotopoulos, Inequality problems in mechanics and applications,
  Birkh\"{a}user, Basel, 1985.

\bibitem{SoHaSh2006}
M.~Sofonea, W.~Han, M.~Shillor, Analysis and approximation of contact problems
  with adhesion or damage, Chapman \& Hall/CRC Boca Raton, 2006.

\bibitem{SoMa2012}
M.~Sofonea, A.~Matei, Mathematical {M}odels in {C}ontact {M}echanics, Cambridge
  University Press, Cambridge, 2012.

\bibitem{MiOcSo2013}
S.~Mig\'orski, A.~Ochal, M.~Sofonea, Nonlinear Inclusions and Hemivariational
  Inequalities. Models and analysis of contract problems, Springer, New York,
  2013.

\bibitem{SoMi2018}
M.~Sofonea, S.~Mig\'orski, Variational-hemivariational inequalities with
  applications, Boca Raton: Chapman {\&} Hall/CRC Press, 2018.

\bibitem{Li1968}
J.~L. Lions, Contr\^ole optimal des syst\`emes gouvern\'es par des \'equations
  aux d\'eri\'ees partielles, Dunod-Gauthier Villars, Paris, 1968.

\bibitem{NeSpTi2006}
P.~Neittaanmaki, J.~Sprekels, D.~Tiba, Optimization of elliptic systems. Theory
  and applications, Springer, New York, 2006.

\bibitem{HiPiUlUl2009}
M.~Hinze, P.~Pinnau, M.~Ulbrich, S.~Ulbrich, Optimization with PDE constraints,
  Springer, New York, 2009.

\bibitem{Tr2010}
F.~Tr\"oltzsch, Optimal control of partial differential equations. Theory,
  methods and applications, American Mathematical Society, Providence, 2010.

\bibitem{Cl2013}
F.~Clarke, Functional analysis, calculus of variations and optimal control,
  Springer, London, 2013.

\bibitem{AuCe1984}
J.~P. Aubin, A.~Cellina, Differential inclusions, Springer-Verlag, New York,
  1984.

\bibitem{Gw2013}
J.~Gwinner, Three-field modelling of nonlinear nonsmooth boundary problems and
  stability of differential mixed variational inequalities, Abstract and
  Applied Analysis, Article ID 108043,$ $ (2013) 1--10.

\bibitem{LiZeMo2016}
Z.~H. Liu, S.~D. Zeng, D.~Motreanu, Evolutionary problems driven by variational
  inequalities, J. Differential Equations 260 (2016) 6787--6799.

\bibitem{LiMiZe2017}
Z.~H. Liu, S.~Mig\'orski, S.~D. Zeng, Partial differential variational
  inequalities involving nonlocal boundary conditions in banach spaces, J.
  Differential Equations 263 (2017) 3989--4006.

\bibitem{LiZe2019}
Z.~H. Liu, S.~Zeng, Penalty method for a class of differential variational
  inequalities, Applicable Analysis. $ $ (2019).
\newblock \href {https://doi.org/10.1080/00036811.2019.1652736}
  {\path{doi:10.1080/00036811.2019.1652736}}.

\bibitem{Gw2007}
{\color{black}J}.~{\color{black}Gwinner}, {\color{black}On differential
  variational inequalities and projected dynamical systems - equivalence and a
  stability result}, {\color{black}Discrete and Continuous, Dynamical Systems,
  Supplement} {\color{black}2007} ({\color{black}2007})
  {\color{black}467--476}.

\bibitem{Gw2013-2}
{\color{black}J}.~{\color{black}Gwinner}, {\color{black}On a new class of
  differential variational inequalities and a stability result},
  {\color{black}Math. {P}rogram. } {\color{black}Ser. B 139}
  ({\color{black}2013}) {\color{black}205}--{\color{black}221}.

\bibitem{LiSo2018}
Z.~H. Liu, M.~Sofonea, Differential quasivariational inequalities in contact
  mechanics, Mathematics and Mechanics of Solids 24 (2018) 845--861.

\bibitem{BlOe1994}
{\color{black}E}.~{\color{black}Blum}, {\color{black}W}.~{\color{black}Oettli},
  {\color{black}From optimization and variational inequalities to equilibrium
  problems}, {\color{black}Math. {S}tudent} {\color{black}63}
  ({\color{black}1994}) {\color{black}123}--{\color{black}145}.

\bibitem{ChChRi1999}
{\color{black}0}.~{\color{black}Chadli},
  {\color{black}Z}.~{\color{black}Chbani},
  {\color{black}H}.~{\color{black}Riahi}, {\color{black}Recession methods for
  equilibrium problems and applications to variational and hemivariational
  inequalities}, {\color{black}Discrete and Continuous Dynamical Systems-A}
  {\color{black}5} ({\color{black}1999}) {\color{black}185--196}.

\bibitem{ChChRi2000}
{\color{black}0}.~{\color{black}Chadli},
  {\color{black}Z}.~{\color{black}Chbani},
  {\color{black}H}.~{\color{black}Riahi}, {\color{black}Equilibrium problems
  with generalized monotone bifunctions and applications to variational
  inequalities}, {\color{black}Journal of Optimization Theory and Applications}
  {\color{black}105} ({\color{black}2000}) {\color{black}299--323}.

\bibitem{DaGiMa2003}
{\color{black}P}.~{\color{black}Daniele},
  {\color{black}F}.~{\color{black}Giannessi},
  {\color{black}A}.~{\color{black}Maugeri (Eds.)}, {\color{black}Equilibrium
  Problems and Variational Models}, {\color{black}Springer-Verlag},
  {\color{black}New York}, {\color{black}2003}.

\bibitem{Mo1969}
U.~Mosco, Convergence of convex sets and of solutions of variational
  inequalities, Adv. Math. 3 (1969) 510--585.

\bibitem{Ad1975}
{\color{black} R. A}.~{\color{black}Adams}, {\color{black}Sobolev Spaces},
  {\color{black}Academic Press}, {\color{black}New York}, {\color{black}1975}.

\end{thebibliography}

\end{document}